\title[Canonical bases for Fock spaces and tensor products]
{Canonical bases for Fock spaces \\and tensor products}
\author{Joseph~Chuang}
\address{Department of Mathematics, City University London, Northampton Square, London EC1V 0HB, United Kingdom. {\rm Email: j.chuang@city.ac.uk}}
\author{Kai~Meng~Tan}
\address{Department of Mathematics, National University of Singapore, Block S17, 10 Lower Kent Ridge Road, Singapore 119076.  {\rm Email: tankm@nus.edu.sg}}
\subjclass[2010]{17B37, 20G43}
\thanks{The second author is supported by Singapore Ministry of Education Academic Research Fund R-146-000-172-112.}
\newcounter{marginnotes}
\let\oldmarginpar\marginpar
\renewcommand\marginpar[1]{\addtocounter{marginnotes}{1}
\-\oldmarginpar[\raggedleft\footnotesize #1]%
{\raggedright\footnotesize #1}}
\newcommand{\countnotes}
{\ifnum\value{marginnotes}=0 {}
 \else
   \begin{center}
   \color{red}\bf\Huge
   \ifnum\value{marginnotes}=1 {There is 1 margin note in total.}
   \else {There are \arabic{marginnotes} margin notes in total.}
   \fi
    \end{center}
 \fi}
\definecolor{light}{rgb}{0.6,0.6,0.6}
\theoremstyle{plain}
\newtheorem{theorem}{Theorem}[section]
\newcommand{\newautoreftheorem}[2]{
\newaliascnt{#1}{theorem}\newtheorem{#1}[#1]{#2}\aliascntresetthe{#1}%
\expandafter\def\csname #1autorefname\endcsname{#2}}
\newtheorem*{thm*}{Theorem}
\theoremstyle{definition}
\newtheorem*{rem}{Remark}
\newtheorem*{notation*}{Notation}
\numberwithin{equation}{section} 
\numberwithin{figure}{section}   
\def\hbar{\bar{h}}
\def\Ggl{{\mathfrak{gl}}}
\def\Gsl{{\mathfrak{sl}}}
\def\CF{{\mathcal{F}}}
\def\CP{{\mathcal{P}}}
\def\BB{{\mathbf{B}}}
\def\BC{{\mathbb{C}}}
\def\BN{{\mathbb{N}}}
\def\BS{{\mathbf{S}}}
\def\BT{{\mathbf{T}}}
\def\BU{{\mathbf{U}}}
\def\BX{{\mathbf{X}}}
\def\BZ{{\mathbb{Z}}}
\def\Ba{{\mathbf{a}}}
\def\Bb{{\mathbf{b}}}
\def\Bc{{\mathbf{c}}}
\def\Bd{{\mathbf{d}}}
\def\Bn{{\mathbf{n}}}
\def\Bs{{\mathbf{s}}}
\def\Bt{{\mathbf{t}}}
\def\Hom{\operatorname{Hom}\nolimits}
\def\tp{{\tilde{p}}}
\def\bw#1{{{\bigwedge}\!}^{#1}}
\def\U{\BU}
\def\Up{\BU(\Bn)}
\def\wt{\operatorname{wt}}
\def\tp{\mathbin{\diamond}}
\def\bigtpsymbol{{\Huge \raisebox{-3pt}{$\tp$}}}
\def\bigtp{\mathop{\text{\bigtpsymbol}}}
\def\s{\mathfrak{s}}
\def\t{\mathtt{t}}
\def\B{\mathfrak{B}}
\def\Par{\operatorname{Par}}
\def\P{\mathfrak{P}}
\def\pie{\varrho}
\def\E{\varepsilon_{q=1}}
\begin{document}

\maketitle

\begin{abstract} We relate the canonical basis of the Fock space representation of the quantum affine algebra
$U_q(\widehat\Ggl_{n})$, as defined by Leclerc and Thibon \cite{LT}, to the canonical basis of its restriction to
$U_q(\Gsl_{n})$, regarded as a based module in the sense of Lusztig. More generally we consider the restriction to any parabolic subalgebra. We deduce results on decomposition numbers and branching coefficients of Schur algebras over fields of positive characteristic, generalising those of Kleshchev \cite{Kl} and of Tan and Teo \cite{TT}.
\end{abstract}

\section{Introduction}
The complete determination of the decomposition numbers of the symmetric groups and Schur algebras in positive characteristic $p$ is a well-known and longstanding open problem, for which a complete solution does not seem to be forthcoming.  Related to these decomposition numbers are the $q$-decomposition numbers arising from the canonical basis for the Fock space representation of the quantum affine algebra $U_q(\widehat{\Ggl}_n)$.  These $q$-decomposition numbers, as conjectured by Leclerc and Thibon \cite{LT} and shown by Varagnolo and Vasserot \cite{VV}, are polynomials in $q$ with nonnegative integer coefficients and when evaluated at $q=1$ give the corresponding decomposition numbers for the $v$-Schur algebra in characteristic zero where $v$ is a primitive $n$-th root of unity.  As shown by James \cite{Jam}, when $n =p$, the decomposition matrix for the Schur algebra can be obtained by postmultiplying the decomposition matrix of the $v$-Schur algebra in characteristic zero by an adjustment matrix which has nonnegative integer entries and is unitriangular when the indexing set is suitably ordered.  As such, the $q$-decomposition numbers provide a first approximation to the decomposition numbers of the Schur algebras.

James's Conjecture asserts that this first approximation is in fact an equality whenever the indexing partitions have $p$-weight less than $p$. Even though the conjecture is now known to be false in general \cite{W}, it has been proved in some cases, such as in Rouquier blocks \cite{CT} and in blocks with $p$-weight less than 5 \cite{F1,F2}.  
 It has also been shown that the first approximation is in fact an equality in many cases irrespective of the $p$-weight of the indexing partitions; see for example \cite{beyond,TT}.

In \cite{Kl}, Kleshchev introduces the combinatorics of sign sequences, and uses it to describe the decomposition number $d_{\lambda\mu}$ when the partition $\lambda$ is obtained from $\mu$ by moving one node.  Subsequently, the present authors and Miyachi \cite{CMT2} provide closed formulas for the corresponding $q$-decomposition number $d_{\lambda \mu}(q)$ using the same combinatorics.  More recently, the second author and Teo \cite{TT} provide closed formulas for $d_{\lambda\mu}(q)$ when $\lambda$ is obtained from $\mu$ by moving any number of nodes as long as all of them have the same $n$-residue, and show that, when $n = p$, $d_{\lambda\mu}(1) = d_{\lambda\mu}$.  The astute reader of \cite{TT} who is familiar with the work of Frenkel and Khovanov in \cite{FK} will be struck by the uncanny similarity between the last closed formulas and those describing the canonical bases of tensor powers $V_2^{\otimes d}$ of the natural two-dimensional representation $V_2$ of the quantum enveloping algebra $U_q(\Gsl_2)$, although the former is formulated using the combinatorics of sign sequences while the latter is described by graphical calculus.  It is natural to attempt to find out the exact relationship between the latter canonical bases with that of the Fock space representation.  This is the main motivation of our work appearing in this paper.

We briefly describe our results here. For our purposes it suffices to consider the subalgebra
 $\U = U_q(\widehat{\Gsl}_n)$ of $U_q(\widehat{\Ggl}_n)$.
Let $\Bn = (n_1,\dotsc,n_r)$ be a tuple of positive integers such that $n_1 + \dotsb + n_r = n$, and let $\Up$ denote the subalgebra of $\U$ isomorphic to $U_q(\Gsl_{n_1}) \otimes \dotsb \otimes U_q(\Gsl_{n_r})$.  We show that the Fock space representation $\CF_s$ indexed by an integer $s$ (see \autoref{prelimFock} for formal definition), when restricted from $\U$ to $\Up$, has a natural decomposition, which corresponds to partitioning the standard basis of $\CF_s$ into subsets in a certain way.  Each summand $\CF_{\Bt}$ of the restriction is isomorphic to a tensor product of 
irreducible $\Up$-modules and hence, as a based
module in the sense of Lusztig \cite{Lu}, is equipped with a canonical basis.  We then show that a subset of the canonical basis of $\CF_{s}$ maps to this basis of $\CF_{\Bt}$ under the natural projection map $\CF_s \to \CF_{\Bt}$ (\autoref{T:decomp}).
The difficulty in directly relating the canonical bases
of $\CF_s$ and $\CF_{\Bt}$ stems from the differing nature of the corresponding `bar involutions' that fix basis vectors: in the Fock space, the involution is given by reversing $q$-wedges \cite{LT}, whereas for tensor products of based modules, it is defined through quasi-$R$-matrices \cite{Lu}.

 Subsequently, the results in \cite{FK} can be exploited to provide more information when $n_j = 2$ for some $j$.  In particular, this establishes the exact relationship between the canonical basis of the Fock space and that of $V_2^{\otimes d}$.  We also obtain closed formulas for some of the branching coefficients for $\CF_s$.

We then turn our attention to the Schur algebras.  Assuming the results of Kleshchev \cite{Kl} describing the branching coefficients $[{\rm Res} (L(\mu)) : L(\lambda)]$ when $\lambda$ is obtained from $\mu$ by removing a normal node, we obtain closed formulas for the decomposition numbers $d_{\lambda\mu}$ when $\lambda$ is obtained from $\mu$ by moving some nodes whose $p$-residues are pairwise non-adjacent (\autoref{C:Schur}), generalising the results of \cite{TT} on the decomposition numbers.  We also show that $[{\rm Res} (L(\mu)) : L(\lambda)]= 0$ whenever $\lambda$ is obtained from $\mu$ by removing a node and moving other nodes while preserving their $p$-residues, such that the $p$-residues of all these nodes are pairwise non-adjacent (\autoref{branching}).

We now indicate the layout of this paper.  We begin in the next section with a short account of the background theory which we require.  In \autoref{setup}, we prove some preliminary results which we shall require in a general setting to deal simultaneously with the quantized Fock spaces and the Grothendieck group of the Schur algebras.  In \autoref{RestrictToU(n)}, we describe the restriction of the quantized Fock space to $\Up$ as a direct sum of factors $\CF_{\Bt}$ isomorphic to tensor products of exterior powers of the natural $n_j$-dimensional representations of the $U_q(\Gsl_{n_j})$. In \autoref{compare}, we relate the canonical basis of the quantized Fock space to that of $\CF_{\Bt}$.  In \autoref{S:decompnumbers}, we apply an argument analogous to that used in \autoref{compare} to the Schur algebras and obtain closed formulas for the decomposition numbers mentioned above.

\section{Preliminaries} \label{prelim}

Denote $\BN = \{1,2,\dotsc,\}$ and $\BN_0 = \{ 0,1,\dotsc \}$.

\subsection{Partitions, $\beta$-numbers, abaci}

A partition $\lambda = (\lambda_1,\lambda_2,\dotsc)$ is an infinite weakly decreasing sequence of non-negative integers such that $\lambda_k = 0$ for all large enough $k$.  We write $|\lambda|$ for $\sum_{i=1}^{\infty} \lambda_i$, and denote the set of all partitions by $\CP$.

For $\lambda \in \CP$, define its Young diagram $[\lambda] = \{ (i,j) \in \BN^2 \mid j\leq \lambda_i \}$. The elements of $\BN^2$ are usually called nodes in this context.  A node $(a,b)$ is to the left of a node $(c,d)$ if $b<d$, in which case $(c,d)$ is to the right of $(a,b)$.  If $n \in \BN$, the $n$-residue of a node $(i,j)$ is the residue class of $j-i$ modulo $n$.  For convenience, for $k \in \mathbb{Z}$, we will say a node has $n$-residue $k$ if the $n$-residue of the node is the residue class of $k$ modulo $n$.

A node $\mathfrak{n} \in [\lambda]$ is removable if $[\lambda] \setminus \{\mathfrak{n}\} = [\mu]$ for some $\mu \in \CP$, in which case we also say that $\mathfrak{n}$ is an addable node of $[\mu]$ (or simply $\mu$).  The removable node is normal if it, as well as any of the removable or addable nodes with the same $n$-residue as and to the right of $\mathfrak{n}$, has at least as many removable nodes as addable nodes of the same $n$-residue to its right.

For $(a,b) \in [\lambda]$, define
$$
\mathfrak{h}_{a,b}(\lambda) = \{ (i,j) \in [\lambda] \mid i \geq a,\ j \geq \max(b,\lambda_{i+1}) \}.
$$
This is a rim hook of $[\lambda]$ (or simply $\lambda$).  Note that $[\lambda] \setminus \mathfrak{h}_{a,b}(\lambda) = [\nu]$ for some $\nu \in \CP$; we say that $\nu$ is obtained from $\lambda$ by unwrapping $\mathfrak{h}_{a,b}(\lambda)$ and $\lambda$ is obtained from $\nu$ by wrapping $\mathfrak{h}_{a,b}(\lambda)$.

A subset $B$ of $\BZ$ is a set of $\beta$-numbers if $|\BN_0 \cap B| + |\BZ_{<0} \setminus B|$ is finite.  Denote the collection of all sets of $\beta$-numbers by $\B$.  If $B \in \B$, then since it is bounded above, we can arrange its elements in decreasing order and obtain its associated $\beta$-sequence $\overline{B} = (\overline{B}_1,\overline{B}_2,\dotsc)$.
Write $\s(B) = |\BN_0 \cap B| - |\BZ_{<0} \setminus B|$, and for each $s \in \BZ$, let $\B_s = \{ B \in \B \mid \s(B) = s\}$.


For each $\lambda \in \CP$ and $s \in \BZ$, define $\beta_s(\lambda) = \{ \lambda_i + s-i \mid i \in \BN \}$.  Then $\beta_s(\lambda) \in \B_s$.  In fact, $\lambda \leftrightarrow \beta_s(\lambda)$ gives a one-to-one correspondence between $\CP$ and $\B_s$.  More generally, we have a bijection between $\CP \times \BZ$ and $\B$ given by $(\lambda,s) \leftrightarrow \beta_s(\lambda)$.  For each $B \in \B$, write $\Par(B)$ for the partition such that $(\Par(B),\s(B)) \leftrightarrow B$.

The $n$-abacus was introduced by James to facilitate manipulations with rim hooks of size $n$ (see, e.g. \cite{JK}).
It has $n$ vertical runners, labelled $0$, $1$, $\dotsc$, $n-1$ from left to right, and infinitely many rows, labelled by $\BZ$ in an ascending order from top down.  The position on row $i$ and runner $j$ of the $n$-abacus is labelled $in+j$.  Thus the positions on the $n$-abacus are labelled by integers running left to right and top to bottom, with position $0$ in the leftmost runner.  We may display any subset $S$ of $\BZ$ on the $n$-abacus by placing a bead on position $x$ for each $x \in S$.  We do so especially for the elements $B \in \B$. See Figure~\ref{f:exampleabacus} for an example.

\begin{figure}[h]
	\begin{tikzpicture}[y=-1cm, scale=0.8, every node/.style={font=\scalefont{1}}]
	
	\foreach \x in {0,...,8}
	{\node [above] at (\x, -3) {$\x$};
		\draw (\x,-2.5) --(\x,3.5);};
	
	\foreach \y in {-2,...,3}
	{\node [left] at (-1, \y) {$\y$};};
	
	\foreach \x in
	{(0,-2), (1,-2), (2,-2), (3,-2), (4,-2), (5,-2), (6,-2), (7,-2), (8,-2),
		(0,-1), (1,-1), (2,-1), (3,-1), (4,-1), (5,-1), (6,-1), (7,-1), (8,-1),
		(0, 0),         (2, 0), (3, 0),         (5, 0),         (7, 0),
		(1, 1), (2, 1),         (4, 1),                 (7, 1), (8, 1),
		(0, 2),                 (3, 2),                 (6, 2),         (8, 2)
	}
	{\draw [fill] \x circle [radius=0.2];};
	
	\end{tikzpicture}
	\caption{Displaying the partition $\lambda=(13,12,10,8,8,8,6,5,5,3,2,1,1)$ on a $9$-abacus, with $s=14$.}
	\label{f:exampleabacus}
\end{figure}
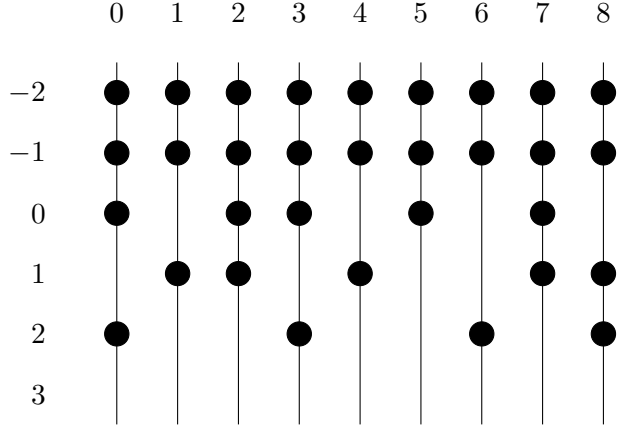


Moving a bead from position $a$ on the $n$-abacus display of $B$ to a vacant position $b$ produces the display of $B' = B \setminus \{a\} \cup \{b\}$, and $\s(B')=\s(B)$.  If $a>b$, then $\Par(B')$ can be obtained from $\Par(B)$ by unwrapping a rim hook of size $a-b$, namely $\mathfrak{h}_{x,y}(\Par(B))$ where $a = \overline{B}_x$, $\overline{B}_{z+1} < b < \overline{B}_{z}$, and $y = z + b+1-\s(B)$.
Conversely, if $\mu \in \CP$ is obtained from $\lambda$ by unwrapping a rim hook of size $c$, then $\beta_s(\mu) = \beta_s(\lambda) \setminus\{x\} \cup \{y\}$ for some $x \in \beta_s(\lambda)$ and $y \notin \beta_s(\lambda)$ such that $x-y = c$.

In particular, moving a bead on runner $i$ in the $n$-abacus display of $B$ to its vacant preceding position corresponds to removing a removable node of $n$-residue $i_s$, where $i_s$ is the residue class of $i-s$ modulo $n$, from $\Par(B)$.

The $n$-core of $\lambda$ is defined to be the partition obtained from $\lambda$ by successively removing rim hooks of size $n$. This is the partition obtained by sliding the beads in the $n$-abacus display of $\lambda$ up their respective runners as high up as possible, and is thus well-defined (i.e.\ independent of the order in which the rim hooks of size $n$ are removed).

Let $\geq$ denote the usual lexicographic ordering on the set of infinite sequences of integers, so that $(a_1,a_2,\dotsc) > (b_1,b_2,\dotsc)$ if and only if there exists $r \in \BN$ such that $a_k = b_k$ for all $k < r$ and $a_r > b_r$.  Then $\geq$ restricts to a total order on $\CP$ and on $\{ \overline{B} \mid B \in \B \}$ and hence on $\B$.
It is easy to see that, for $\lambda,\mu \in \CP$, the following statements are equivalent:
\begin{enumerate}
\item $\lambda \geq \mu$.
\item $\beta_s(\lambda) \geq \beta_s(\mu)$ for all $s \in \BZ$.
\item $\beta_s(\lambda) \geq \beta_s(\mu)$ for some $s \in \BZ$.
\end{enumerate}
Note that, from our definition, it is possible for $\lambda \geq \mu$ when $|\lambda| \ne |\mu|$, and for $B \geq B'$ when $\s(B) \ne \s(B')$.

We now define a coarser order on $\B$.  Let $B,B' \in \B$.  We write $B \to_n B'$ if $B' = B \setminus \{a,b\} \cup \{a-in, b+in\}$ for some $a,b\in B$ and $i \in \BN$ such that $a > b+in$ and $a-in,b+in \notin B$.  The Jantzen order $\geq_{J_n}$ on $\B$ is defined
as follows:  $B \geq_{J_n} B'$ if and only if there exist $B_0,\dotsc, B_t \in \B$ ($t \in \mathbb{N}_0$) such that $B_0 = B$, $B_t = B'$, and $B_{i-1} \to_n B_{i}$ for all $i = 1,\dotsc, t$.  We note that, if $\lambda, \mu \in \CP$, then $\beta_s(\lambda) \to_n \beta_s(\mu)$ for some $s \in \BZ$ if and only if $\beta_s(\lambda) \to_n \beta_s(\mu)$ for all $s \in \BZ$.  As such, we may define the relations $\to_n$ and $\geq_{J_n}$ on $\CP$:  if $\lambda, \mu \in \CP$, then $\lambda \to_n \mu$ (resp.\ $\lambda \geq_{J_n} \mu$) if and only if $\beta_s(\lambda) \to_n \beta_s(\mu)$ (resp.\ $\beta_s(\lambda) \geq_{J_n} \beta_s(\mu)$) for some $s \in \BZ$.  Note that $\lambda \to_n \mu$ if and only if $\lambda \geq \mu$ and $\mu$ is obtained from $\lambda$ by unwrapping a hook of size $in$ from $\lambda$ and then wrapping a hook of size $in$.

\subsection{Quantized enveloping algebras} \label{qea}

Let $\U=U_q(\widehat\Gsl_n)$ be the quantized enveloping algebra
of affine type $A_{n-1}^{(1)}$; it is the unital associative $\BC(q)$-algebra generated by
$e_i, f_i, K_i^{\pm}$ ($0\leq i < n$), subject to the following relations:

{\allowdisplaybreaks
\begin{gather*}
  K_i^+ K_i^- = 1 = K_i^- K_i^+, \qquad K_i^+ K_j^+ = K_j^+ K_i^+, \\
  K_i^+ e_j K_i^- = q^{a_{ij}}e_j, \qquad K_i^+ f_j K_i^- = q^{-a_{ij}} f_j, \\
  e_if_j - f_je_i = \delta_{ij} \frac{K_i^+ - K_i^-}{q - q^{-1}}, \\
  \sum_{k=0}^{1-a_{ij}} (-1)^k \left[ \begin{smallmatrix} 1-a_{ij} \\ k \end{smallmatrix} \right]_q e_i^{1-a_{ij}-k} e_j \, e_i^k = 0 \quad (i \ne j), \\
  \sum_{k=0}^{1-a_{ij}} (-1)^k \left[ \begin{smallmatrix} 1-a_{ij} \\ k \end{smallmatrix} \right]_q f_i^{1-a_{ij}-k} f_j \, f_i^k = 0 \quad (i \ne j).
\end{gather*}}
Here $A = (a_{ij})_{0 \leq i,j < n}$ is the Cartan matrix of type $A_{n-1}^{(1)}$, and $\left[ \begin{smallmatrix} m \\ k \end{smallmatrix} \right]_q = \frac{[m]_q[m-1]_q \dotsm [m-k+1]_q}{[k]_q[k-1]_q \dotsm [1]_q}$ where $[i]_q = q^{1-i}+q^{3-i}+\dotsb + q^{i-3} + q^{i-1}$ for all $i \in \BN$.

For convenience, if $j \in \mathbb{Z}$ and $j \equiv_n \bar{j}$ with $\bar{j} \in \{0,1,\dotsc, n-1\}$, we also write $f_j$ for $f_{\bar{j}}$.

Fix a tuple $(n_1,\ldots, n_r)$ of positive integers such that $n_1+\cdots+n_r = n$, and, for each $j= 1,\dotsc,r$, let $$\sigma_j = \sum_{a=1}^j n_a.$$
Let $\Up$ be the corresponding parabolic subalgebra of $\BU$, isomorphic to $U_q(\Gsl_{n_1})\otimes \dotsb \otimes U_q(\Gsl_{n_r})$, the quantized enveloping algebra of finite type
$A_{n_1-1} \times \dotsb \times A_{n_r-1}$. So $\Up$ is the subalgebra of $\U$ generated by the $e_i$, $f_i$ and $K_i^{\pm}$ such that $i\neq \sigma_k$ for all $k$ with $0\leq k \leq r-1$.

The assignments $\phi(e_i)=e_i$,
$\phi(f_i)=f_i$, $\phi(K_i^{\pm})=K_i^{\mp}$
define a $\BC(q)$-semilinear automorphism
$\phi:\U\to\U$. (Here, and hereafter, a map $f : V \to W$ between $\BC(q)$-vector spaces is $\BC(q)$-semilinear if and only if $f(v_1 + a(q)v_2) = f(v_1) + a(q^{-1})f(v_2)$ for all $v_1,v_2 \in V$ and $a(q) \in \BC(q)$.) Likewise,
$\omega(e_i)=f_i$, $\omega(f_i)=e_i$,
$\omega(K_i^{\pm})=K_i^{\pm}$ 
can be extended to
a $\BC(q)$-linear antiautomorphism of $\U$.  The maps $\phi$ and $\omega$ restrict to an automorphism and an antiautomorphism of $\Up$, respectively.

We define a coproduct $\Delta : \U\to\U\otimes\U$ by
\begin{align*}
\Delta(e_i) &= e_i\otimes 1 + K^{-}_i \otimes e_i, \\
\Delta(f_i) &= f_i\otimes K_i^+ + 1\otimes f_i, \\
\Delta(K^{\pm}_i) &= K^{\pm}_i \otimes K^{\pm}_i.
\end{align*}
It restricts to a coproduct on $\Up$.
Note that $\Delta$ differs from the coproduct used in \cite{Lu} and \cite{Jan}, which is $(\phi\otimes\phi)\circ\Delta\circ\phi$, and from that used in \cite{Ka}, \cite{HK} and \cite{Le}, which is $(\omega\otimes\omega)\circ\Delta\circ\omega$.

Given two $\U$-modules (resp. $\Up$-modules) $M$ and $N$, we take the $\U$-module (resp. $\Up$-module) structure on $M\otimes N$ to be given by the pullback along $\Delta$.

\subsection{Fock spaces} \label{prelimFock}
Let
$$\CF:=\bigoplus_{\lambda\in\CP} \mathbb{C}(q) \lambda$$
be the $\mathbb{C}(q)$-vector space with distinguished basis given by the set $\CP$ of partitions of all natural numbers.

In \cite{LT}, Leclerc and Thibon defined an $\BC(q)$-semilinear bar involution $x \mapsto \overline{x}$ on $\mathcal{F}$. They proved the
existence of another distinguished basis $\{ G(\lambda) \mid \lambda \in \mathcal{P} \}$ of $\mathcal{F}$, called the canonical basis, which has the following characterization:
$$ G(\lambda) - \lambda \in \sum_{\mu \in \mathcal{P}} q\mathbb{Z}[q] \mu,\quad \overline{G(\lambda)} = G(\lambda).$$

Let $\left\langle - , - \right\rangle$ be the symmetric bilinear form on $\mathcal{F}$ with respect to which $\mathcal{P}$ is orthonormal.
For $\lambda,\mu \in \mathcal{P}$, define $d_{\lambda\mu}(q) \in \mathbb{C}(q)$ by
$$
d_{\lambda\mu}(q) = \langle G(\mu), \lambda \rangle.
$$
As the involution defined in \cite{LT} depends on a fixed integer $n \geq 2$, $d_{\lambda \mu}(q)$ also implicitly depends on $n$.  When we need to emphasize the role of $n$, we shall write $d^{\,n}_{\lambda\mu}(q)$ instead.  For convenience, we further define $d^{\,1}_{\lambda\mu}(q) = \delta_{\lambda\mu}$.

We shall make use of the following remarkable properties of these $q$-decomposition numbers $d_{\lambda\mu}(q)$:

\begin{thm} \label{T:appendix}
\hfill
\begin{enumerate}
\item $d_{\mu\mu}(q) = 1$.
\item If $\lambda \ne \mu$, then $d_{\lambda\mu}(q) \in q\mathbb{N}_0[q]$.
\item If $d_{\lambda\mu}(q) \ne 0$, then $\mu \geq_{J_n} \lambda$.
\end{enumerate}
\end{thm}

Recall the collection $\B$ of all sets of $\beta$-numbers.   Define $\CF_{\BZ}$ to be the $\BC(q)$-vector space with basis $\B$. Following \cite{H,MM}, we define an action of $\U = U_q(\widehat{\Gsl}_n)$ on $\CF_{\BZ}$ as follows.  Let $B \in \B$, and take $x\notin B$ such that $x-1 \in B$.  Let $C = B \setminus \{x-1\} \cup \{x\}$, and
\begin{align*}
N_>(B,C) &= |\{ y \in B \mid y > x,\ y \equiv_n x-1 \}| - |\{ y \in B \mid y>x,\ y \equiv_n x \}|, \\
N_<(B,C) &= |\{ y \notin B \mid y < x-1,\ y \equiv_n x-1 \}| - |\{ y \notin B \mid y<x-1,\ y \equiv_n x \}|.
\end{align*}
Here, and hereafter, we write $a\equiv_n b$ for $a \equiv b \pmod n$. For $B\in\B$ and $0\leq i < n$, let
$$N_i(B)=|\{y\notin B\mid y-1\in B, y\equiv_n i\}|
-
|\{y\in B\mid y-1\notin B, y\equiv_n i\}|.$$
Then we have
\begin{align*}
e_i(C) = \sum_B q^{N_<(B,C)}C,  \qquad   f_i(B) = \sum_{C} q^{N_>(B,C)}C, \qquad
K^{+}_i(B)=q^{N_i(B)},
\end{align*}
where the first sum runs over all $B\in \B$ such that $B = C\setminus \{x\} \cup \{x-1\}$ for some $x \in C$, $x \equiv_n i$ and $x-1 \notin C$, and second sum over all $C\in\B$ such that $C = B \setminus \{x-1\} \cup \{x\}$ for some $x-1 \in B$, $x \equiv_n i$ and $x \notin B$.

Let $s\in \BZ$, and write $\CF_s$ for the vector subspace of $\CF_{\BZ}$ with basis $\B_s$ ($= \{ B \in \B \mid \s(B) = s\}$).  It is easy to see that $\CF_s$ is invariant under the $\U$-action defined above.  Furthermore, the bijection from $\CP$ to $\B_s$ defined by $\lambda \mapsto \beta_s(\lambda)$ induces a $\BC(q)$-linear isomorphism $\beta_s : \CF \to \CF_s$.  Thus via this isomorphism, $\CF_s$ inherits the bar-involution from $\CF$, while $\CF$ inherits an $\U$-action from $\CF_s$.  The action of $e_i$ and $f_i$ on $\CF$ via $\beta_s$ may be described as follows.  Let $i_s$ be the residue class of $i-s$ modulo $n$. Suppose that $\lambda \in \CP$ has an addable node $\mathfrak{n}$ of $n$-residue $i_s$, and let $\mu$ be the partition obtained by adding $\mathfrak{n}$ to $\lambda$.  Let $N_>(\lambda,\mu)$ (resp.\ $N_<(\lambda,\mu)$) be the number of addable nodes of $\lambda$, of $n$-residue $i_s$ and to the right (resp.\ left) of $\mathfrak{n}$, minus the number of removable nodes of $\lambda$, of $n$-residue $i_s$ and to the right (resp.\ left) of $\mathfrak{n}$.  For $\lambda\in\CP$ and $0\leq i<n$, let
$N_i(\lambda)$ be the number of addable nodes of $\lambda$ of $n$-residue $i$ minus the number of removable nodes of $\lambda$ of $n$-residue $i$. Then
$$
e_i(\mu) = \sum_{\lambda} q^{-N_<(\lambda,\mu)} \lambda, \qquad f_i(\lambda) = \sum_{\mu} q^{N_>(\lambda,\mu)} \mu,\qquad
K^{+}_i(\lambda)=q^{N_i(\lambda)}\lambda,
$$
where the first sum runs over all partitions $\lambda$ that can be obtained by removing a removable node of $n$-residue $i_s$ from $\mu$ and the second sum runs over all partitions $\mu$ that can be obtained by adding an addable node of $n$-residue $i_s$ to $\lambda$.

For each $\lambda \in \CP$, write $G_s(\lambda)$ for $\beta_s(G(\lambda))$.
\begin{thm}[\cite{LT}] \label{T:appendixlast}
For all $u\in \U$ and $x\in\mathcal{F}_s$, we have
$$\overline{ux}=\phi(u)\,\overline{x}.$$
In particular, for $0\leq i < n$,
$$f_i(G_s(\lambda)) = \sum_{\mu \in \CP} L_\lambda^{\mu}(q) G_s(\mu),$$
where $L_\lambda^{\mu}(q) = L_\lambda^{\mu}(q^{-1}) \in \BC(q)$ for all $\mu \in \CP$.
\end{thm}
Lascoux and Thibon in fact extend the action of $\U$ on $\mathcal{F}_s$ to an action of the larger algebra
$U_q(\widehat\Ggl_{n})$, and prove the theorem in this context. We will not require the extended action.

\subsection{Tensor products of based modules}
In this subsection we give an abbreviated introduction to Lusztig's theory of based modules over quantized enveloping algebras; see \cite[Chapters 27-28]{Lu} for full details, though note that since our coproduct is slightly different, we need to swap $q$ and $q^{-1}$ in Lusztig's account. The theory is valid in finite type; we consider specifically based modules of $\Up$, the subalgebra of $\U$ defined in \autoref{qea}.

A based $\Up$-module $M$ is a finite-dimensional weight $\Up$-module equipped with a $\BC(q)$-basis $B$ satisfying certain conditions, amongst which are the following:
\begin{enumerate}
 \item each $b\in B$ is a weight vector.
 \item for all $u\in \Up$ and $m\in M$, we have $\psi_M(um)=\phi(u)\psi_M(m)$,
where $\psi_M$ is the $C(q)$-semilinear involution of $M$ fixing every element of $B$.
\end{enumerate}

Any highest weight irreducible $\Up$-module, together with its canonical basis, is a based module. 

Recall that $\Up\cong U_q(\Gsl_{n_1})\otimes\dotsb
\otimes U_q(\Gsl_{n_r})$. Given a based
$U_q(\Gsl_{n_j})$-module $(M^{(j)},B^{(j)})$ for each
$j=1,\dotsc, r$, we may form the `external' tensor product
$$M=M^{(1)}\otimes\dotsb\otimes M^{(r)},$$
a $\Up$-module. It is then clear that the basis
$B=B^{(1)}\otimes\dotsb\otimes B^{(r)}$ satisfies the two properties
above, and in fact it is true that $(M,B)$ is a based
$\Up$-module.

We now turn to the consideration of `internal' tensor products.
Let $(M_1,B_1),\ldots, (M_d,B_d)$ be based modules of $\Up$. The coproduct on $\Up$ gives a $\Up$-module structure
$M=M_1\otimes\dotsb\otimes M_d$. However $M$ together with
the obvious basis
$\BB^{\otimes} := B_1\otimes \dotsb\otimes B_d$ is not necessarily a based module. In order to correct this deficiency,
Lusztig defines an involution $\psi_M$ of $M$, constructed out of the involutions $\psi_{M_i}$ and the quasi-$R$-matrix
of $\Up$, and shows that for each
$\Bb = (b_1,\dotsc,b_d)\in \BB := B_1\times\dotsb\times B_d$
there exists a unique element
$$\Bb^{\tp}=b_1\tp\dotsb\tp b_d \in M$$
such that
$$\psi_M(\Bb^{\tp}) =\Bb^{\tp} \quad \text{and}\quad \Bb^{\tp}-\Bb^{\otimes} \in \bigoplus_{\Bc\in \BB} q\BZ[q]\Bc^{\otimes},$$
where $\Bb^{\otimes} = b_1 \otimes \dotsb \otimes b_d$.
He also proves that, writing
$$
\BB^{\tp} = B_1 \tp \dotsb \tp B_d := \{ \Bb^{\tp} \mid \Bb \in \BB\},$$
$(M, \BB^{\tp})$ is a based $\Up$-module.

In order to formulate some required additional properties of this canonical basis,
we introduce a partial order on $\BB$, a reverse lexicographic order, as follows:
$\Bb = (b_1,\dotsc,b_d) > \Bb' = (b'_1,\dotsc,b'_d)$ if and only if there exists $i$ such that $\wt(b_j)=\wt(b'_j)$ for all $i<j\leq d$ and $\wt(b_i)< \wt(b'_i)$.

\begin{lemma}\label{l:canonicaltensorproduct}
Keep the notations above.
\begin{enumerate}
\item We have, for all $\Bb\in \BB$,
$$\Bb^{\tp}-\Bb^{\otimes} \in \bigoplus_{\Bc<\Bb} q\BZ[q]\Bc^{\otimes}.$$
\item Let $\Bb = (b_1,\dotsc,b_d)\in \BB$, and suppose that $b_d$ is a highest weight vector in
$M_d$. 
Writing $\Ba=(b_1,\dotsc,b_{d-1})$, we have
$$\Bb^{\tp}=\Ba^{\tp}\otimes b_d.$$
\end{enumerate}
\end{lemma}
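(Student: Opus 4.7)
The plan is to induct on $d$. The key tool in both parts is the explicit structure of the quasi-$R$-matrix $\Theta = \sum_{\mu} \Theta_\mu$, where $\Theta_0 = 1 \otimes 1$ and, for $\mu > 0$, $\Theta_\mu$ lies in $\Up^-_{-\mu} \otimes \Up^+_\mu$, together with Lusztig's recipe $\psi_M = \Theta \circ (\psi_{M'} \otimes \psi_{M_d})$ when $M = M_1 \otimes \dotsb \otimes M_d$ is regrouped as $M' \otimes M_d$ with $M' = M_1 \otimes \dotsb \otimes M_{d-1}$.

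For part (1), I would first settle the case $d=2$. Since the $\psi_{M_i}$ fix basis elements, $\psi_M(b_1 \otimes b_2) = \Theta(b_1 \otimes b_2)$; the $\mu = 0$ piece returns $b_1 \otimes b_2$, while for each $\mu > 0$ the vector $\Theta_\mu(b_1 \otimes b_2)$ has its second tensor factor of weight strictly greater than $\wt(b_2)$. Expanding in $B_1 \otimes B_2$, every summand $b'_1 \otimes b'_2$ then satisfies $\wt(b'_2) > \wt(b_2)$, hence $(b'_1,b'_2) < (b_1,b_2)$ in reverse lexicographic order. For the inductive step, apply this two-factor analysis to the regrouping $M' \otimes M_d$: the $\Theta$-factor can only strictly raise the $d$-th weight, while the inductive hypothesis applied inside $M'$ controls $\psi_{M'}(\Ba^{\otimes})$ by indices below $\Ba$ in the first $d-1$ slots. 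Either way the resulting index in $\BB$ is strictly less than $\Bb$. A standard Kazhdan--Lusztig-style triangular recursion with respect to $<$, applied to the defining characterisation of $\Bb^{\tp}$, then upgrades the congruence to the sharper expansion
\[
\Bb^{\tp} - \Bb^{\otimes} \in \bigoplus_{\Bc < \Bb} q\BZ[q] \Bc^{\otimes}.
\]

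For part (2), the crucial observation is that if $b_d$ is a highest weight vector in $M_d$, then $\Up^+_\mu \cdot b_d = 0$ for all $\mu > 0$, and so $\Theta_\mu(m \otimes b_d) = 0$ for every $m \in M'$ and every $\mu > 0$. Consequently $\Theta(m \otimes b_d) = m \otimes b_d$, which gives
\[
\psi_M(\Ba^{\tp} \otimes b_d) = \psi_{M'}(\Ba^{\tp}) \otimes \psi_{M_d}(b_d) = \Ba^{\tp} \otimes b_d,
\]
using the inductive bar-invariance of $\Ba^{\tp}$ and the fact that $b_d \in B_d$ is fixed by $\psi_{M_d}$. Moreover, by induction $\Ba^{\tp} - \Ba^{\otimes}$ is a $q\BZ[q]$-combination of $\Bc'^{\otimes}$ for $\Bc' \in B_1 \times \dotsb \times B_{d-1}$, so tensoring with $b_d$ yields $\Ba^{\tp} \otimes b_d - \Bb^{\otimes} \in \bigoplus_{\Bc} q\BZ[q]\Bc^{\otimes}$. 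The uniqueness clause in Lusztig's characterisation of $\Bb^{\tp}$ then forces $\Bb^{\tp} = \Ba^{\tp} \otimes b_d$.

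I expect the main obstacle to be carrying the reverse-lexicographic refinement of part (1) cleanly through the inductive step for $d \geq 3$. Applying the two-factor result to $M' \otimes M_d$ only controls weight changes in the last slot, so one must combine it with the inductive refinement on $M'$ while tracking which position $i$ first witnesses a weight drop, and then verify that the resulting index in $\BB$ is genuinely smaller than $\Bb$ in the reverse lexicographic order. Once this bookkeeping is in place, the triangular inversion that produces the canonical basis from the bar involution is routine.
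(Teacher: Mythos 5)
Your argument is correct and follows essentially the same route as the paper: part (1) is exactly the content of the cited Lusztig result (Theorem 27.3.2(b)), which you simply unpack via the quasi-$R$-matrix triangularity and the standard bar-invariance recursion, and part (2) is the same direct evaluation of $\psi_M(\Ba^{\tp}\otimes b_d)$ using that the $U^+$-components of $\Theta_\mu$ ($\mu>0$) kill the highest weight vector $b_d$, as in the proof of Lusztig's Lemma 28.2.6 which the paper invokes. The bookkeeping you worry about in the inductive step of (1) is harmless, since any term produced by $\Theta_\mu$ with $\mu>0$ strictly raises the weight in the last slot and is therefore smaller than $\Bb$ irrespective of the earlier slots, while the $\mu=0$ terms are handled by the inductive hypothesis on $M_1\otimes\dotsb\otimes M_{d-1}$.
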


\begin{proof}
This first part follows by iterated application of
\cite[Theorem 27.3.2(b)]{Lu}. 
 The second is checked by directly evaluating
$\psi_M(\Ba^{\tp} \otimes b_d)$; see the proof of \cite[Lemma 28.2.6]{Lu}
\end{proof}

Finally we explain how to handle a mixture of external and internal tensor products of based modules.
Suppose that for $j=1,\dotsc,r$ we are given based $U_q(\Gsl_{n_j})$-modules $$(M_1^{(j)},B_1^{(j)}),\dotsc,
(M_d^{(j)},B_d^{(j)}).$$ Then, as described above, for each $j$ we have a
based $U_q(\Gsl_{n_j})$-module
$$M^{(j)}=M_1^{(j)}\otimes\dotsb\otimes M_d^{(j)}$$
with basis
$$\BB^{(j)\tp} = B^{(j)}_1 \tp \dotsb \tp B^{(j)}_d.$$
We can then take the external tensor product, obtaining
a based $\Up$-module
$$M'=M^{(1)}\otimes\dotsb\otimes M^{(r)}$$ with
basis
$$\BB^{\tp\otimes}=\BB^{(1)\tp}\otimes\dotsb\otimes \BB^{(r)\tp}.$$

On the other hand, we may take external tensor products first, before taking internal tensor products. For each $i=1,\ldots, d$, we have the based $\Up$-module
$$ M_i=M_i^{(1)}\otimes\dotsb\otimes M_i^{(r)}.$$
with basis
$$\BB_i^{\otimes}=B_i^{(1)}\otimes\dotsb\otimes B_i^{(r)}.$$
The internal tensor product
$$M''=M_1\otimes\dotsb\otimes M_d$$
is then a based $\Up$-module with basis
$$\BB^{\otimes \tp} =\BB_1^{\otimes}\tp\dotsb\tp \BB_d^{\otimes}.$$

Thus, to each $\Bb \in \BB = \prod_{1\leq i \leq d,\, 1 \leq j \leq r} B^{(j)}_i$, we have associated a distinguished basis element $\Bb^{\tp \otimes} \in \BB^{\tp \otimes} \subseteq M'$ and another distinguished basis element $\Bb^{\otimes \tp} \in \BB^{\otimes \tp} \subseteq M''$.

\begin{prop}\label{P:internal-external}
Keep the notation as above. The obvious correspondence between $M'$ and $M''$ gives an isomorphism of based $\Up$-modules.  Furthermore $\Bb^{\tp \otimes} \leftrightarrow \Bb^{\otimes \tp}$ for each $\Bb \in \BB$ under this correspondence.
\end{prop}

\begin{proof}
Since the coproduct of $\Up$ is simply the tensor product
of the coproducts of its factors $\U_q(\Gsl_{n_j})$, the
natural identification of vector spaces
$$M=M^{(1)}\otimes\dotsb\otimes M^{(r)} \,=\enspace
\bigotimes_{\substack{1\leq i \leq d \\ 1\leq j \leq r}}
 M_i^{(j)}\,=\enspace
M_1\otimes\dotsb\otimes M_d=M''$$
is an isomorphism of $\Up$-modules.
Under this identification $\Bb^{\tp \otimes} = \Bb^{\otimes \tp}$ for all $\Bb \in \BB$, as they are characterized in terms of involutions that coincide because the quasi-$R$-matrix of $\Up$ is the tensor product of the
quasi-$R$-matrices of its factors $U_q(\Gsl_{n_j})$.
\end{proof}

\subsection{Schur algebras}
Let $\BS_m = S(m,m)$ be
the Schur algebra of degree $m$ over a field $\mathbb{F}$ of characteristic $p>0$. This is a quasi-hereditary algebra with indexing set $\{ \lambda\in\CP \mid |\lambda|=m\}$, together with the order opposite to the lexicographic order; here we are following the notation
conventions in \cite{Jam}.  Denote by $L(\lambda)$, $\Delta(\lambda)$ and $P(\lambda)$
the simple, standard and projective indecomposable modules associated to $\lambda$. By Brauer-Humphrey's reciprocity the decomposition number $d_{\lambda\mu}:=[\Delta(\lambda):L(\mu)]$ may be expressed as the multiplicity
$[P(\mu):\Delta(\lambda)]$ of $\Delta(\lambda)$ in a $\Delta$-filtration of $P(\mu)$; even though the filtrations are not unique, the multiplicities are.  These numbers enjoy the following well-known properties:

\begin{prop} \label{decompnumbers}
Let $\lambda, \mu \in \CP$.  Then
\begin{enumerate}
\item $d_{\lambda \lambda} = 1$;
\item $d_{\lambda\mu}\neq 0$ only if $\lambda\leq_{J_p} \mu$ (see \cite[Theorem 2.5(iv)]{TT});
\item $d_{\lambda\mu}\geq d_{\lambda\mu}(1)$ (by \cite{VV} and \cite{Jam}).
\end{enumerate}
\end{prop}

For each residue class $i$ modulo $p$, one can define functors
\begin{align*}
i\text{-Res}_m &: \BS_{m+1}\text{-\textbf{mod}} \to \BS_m\text{-\textbf{mod}}, \\
i\text{-Ind}_{m} &: \BS_m\text{-\textbf{mod}} \to \BS_{m+1}\text{-\textbf{mod}}.
\end{align*}
They are a biadjoint pair of exact functors, sending projectives to projectives.  In particular, if $\lambda \in \CP$ with $|\lambda| = m$, then
$$ i\text{-Ind}_{m} (P(\lambda)) \cong \bigoplus_{\substack{\mu \in \CP \\ |\mu| = m+1}} P(\mu)^{\oplus L_{\lambda}^\mu},
$$
where each $L_{\lambda}^{\mu} \in \BN_0$.  In fact,
$$
L_{\lambda}^{\mu} = \dim_{\mathbb{F}}( \Hom_{\BS_{m+1}}(i\text{-Ind}_{m} (P(\lambda)), L(\mu)))
= \dim_{\mathbb{F}}(\Hom_{\BS_m}(P(\lambda), i\text{-Res}_{m} (L(\mu)))),
$$
so that $L_{\lambda}^{\mu}$ is the multiplicity $[i\text{-Res}_{m} (L(\mu)): L(\lambda)]$ of $L(\lambda)$ as a composition factor of $i\text{-Res}_{m} (L(\mu))$.  Kleshchev describes some of these multiplicities.

\begin{theorem}[{\cite[Theorem 9.3]{Kl}}] \label{Klthm}
Let $\mu \in \CP$ with $|\mu| = m$.  Suppose that $\mu$ has a normal node $\mathfrak{n}$ of $p$-residue $i$, and let $\lambda$ be the partition obtained when this node is removed from $\mu$.  Then
\begin{multline*}
[i\text{-}{\rm Res}_{m} (L(\mu)): L(\lambda)] = \\1 + \text{number of normal nodes of $\mu$ of $p$-residue $i$ and to the right of $\mathfrak{n}$}.
\end{multline*}
\end{theorem}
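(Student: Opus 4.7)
The plan is to exploit the biadjoint pair $(i\text{-Res}_m, i\text{-Ind}_{m-1})$ together with the categorification of the $U(\widehat\Gsl_p)$-action on the Fock space at $q=1$ that these functors provide on $\bigoplus_m K(\BS_m\text{-mod})$. Under this categorification, the class of $L(\mu)$ corresponds to the specialization of $G_s(\mu)$ at $q=1$ (for an appropriate choice of charge $s$), and $[i\text{-Res}_m(L(\mu)):L(\lambda)]$ equals the coefficient of $[L(\lambda)]$ in $e_i \cdot [L(\mu)]$ computed via the Fock space combinatorics.

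I would argue by induction on $k$, the number of normal nodes of $\mu$ of residue $i$ strictly to the right of $\mathfrak{n}$.  The base case $k=0$ is Kleshchev's modular branching rule: when $\mathfrak{n}$ is the rightmost normal node of residue $i$ in $\mu$, one has $\lambda = \te_i\mu$ for the crystal operator $\te_i$, and the socle of $i\text{-Res}_m(L(\mu))$ is the simple module $L(\lambda)$, appearing there with multiplicity one.  Combining the socle identification with the self-duality of $i\text{-Res}_m(L(\mu))$ under the standard contravariant duality of $\BS_m$ (which fixes simples) then forces $L(\lambda)$ not to occur elsewhere in the composition series, giving total multiplicity exactly $1$.

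For the inductive step, I would exploit the divided power functor $i\text{-Res}^{(k+1)}$, which acts on Grothendieck groups as $e_i^{(k+1)}$.  Each of the $k+1$ rightmost normal nodes of residue $i$ (namely $\mathfrak{n}$ together with the $k$ normal nodes to its right) gives rise to a length-$(k+1)$ chain of normal-node removals that ends at $\mathfrak{n}$ and produces a copy of $L(\lambda)$ inside $(i\text{-Res})^{k+1}(L(\mu))$.  Peeling off one application of $i\text{-Res}$ and applying the inductive hypothesis on the remaining $k$ steps delivers the lower bound $[i\text{-Res}_m(L(\mu)):L(\lambda)] \geq 1+k$.

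The main obstacle is the matching upper bound: ruling out composition factors $L(\lambda)$ that could arise from removing a normal node strictly to the \emph{left} of $\mathfrak{n}$, or from a non-normal removable node of residue $i$.  Both exclusions are governed by the right-to-left sign-sequence combinatorics defining normal nodes, which is precisely what ensures that reaching $\lambda$ from $\mu$ by a crystal-compatible restriction requires removing a node at position $\mathfrak{n}$ or to its right.  Establishing this rigidity sharply --- essentially the content of \cite[Theorem~9.3]{Kl} --- is the delicate part of the argument and requires the full modular branching machinery developed across Kleshchev's series of papers on symmetric group branching.
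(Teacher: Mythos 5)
This statement is not proved in the paper at all: it is imported verbatim from Kleshchev (\cite[Theorem 9.3]{Kl}) and used as an input, and the authors even remark at the end of \autoref{S:decompnumbers} that they deliberately avoid creating the impression of having an independent proof of \autoref{Klthm}. Measured against that, your proposal is not a proof either, and it has genuine gaps beyond the final admission. Most seriously, the decisive step --- the upper bound excluding occurrences of $L(\lambda)$ other than the ones produced by your divided-power construction --- is explicitly deferred to ``the full modular branching machinery developed across Kleshchev's series of papers,'' which is exactly the content of the theorem you are asked to prove; as written the argument is circular. In addition, the framework set up in your first paragraph is unsound: in characteristic $p$ the class $[L(\mu)]$ (equivalently $[P(\mu)]$) does \emph{not} correspond to the $q=1$ specialisation of the canonical basis vector $G_s(\mu)$ in general --- that identification is precisely James's Conjecture, which is false by Williamson \cite{W}, and the paper only proves a projection of it in special cases (\autoref{T:Schur}) by \emph{assuming} Kleshchev's theorem, not conversely. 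So branching multiplicities cannot simply be read off from $e_i$ acting on canonical basis vectors at $q=1$.

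The base case is also not established by the argument you give. Self-duality of $i\text{-Res}_m(L(\mu))$ together with a simple socle does not force the socle constituent to have multiplicity one: a self-dual module can repeat its socle factor in the interior of a composition series, and indeed Kleshchev's results say that the constituent $L(\te_i\mu)$ occurs in $i\text{-Res}_m(L(\mu))$ with multiplicity equal to the total number of $i$-normal nodes of $\mu$, which is usually bigger than one. This shows both that the duality argument proves too much and that your identification of the $k=0$ node with the good node $\te_i\mu$ needs care with the left/right conventions of the paper. To make this route honest you would have to reconstruct the sign-sequence rigidity analysis of \cite{Kl} (or the Fock-space reformulation in \cite{CMT2}) rather than cite it, so as it stands the proposal does not close the gap.
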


Let $i\text{-Res} = \bigoplus_{m} i\text{-Res}_m$ and $i\text{-Ind} = \bigoplus_m i\text{-Ind}_m$.
For convenience, for $k \in \mathbb{Z}$, we also write $k\text{-Res}$ and $k\text{-Ind}$, which mean $\bar{k}\text{-Res}$ and $\bar{k}\text{-Ind}$ respectively, where $\bar{k}$ is the residue class of $k$ modulo $p$.

The effects of $i$-Res and $i\text{-Ind}$ on the standard module $\Delta(\lambda)$ can be easily described in the Grothendieck group $K_0(\bigoplus_m \BS_m\text{-\textbf{mod}})$ as follows:
\begin{align*}
[i\text{-\textrm{Res}}(\Delta(\lambda))] &= \sum_{\tau} [\Delta(\tau)]; \\
[i\text{-\textrm{Ind}}(\Delta(\lambda))] &= \sum_{\mu} [\Delta(\mu)],
\end{align*}
where the first sum runs over all partitions $\tau$ that can be obtained by removing a removable node of $p$-residue $i$ from $\lambda$, while the second sum runs over all partitions $\mu$ that can be obtained by adding an addable node of $p$-residue $i$ to $\lambda$.

It is natural to identify $K_0(\bigoplus_m \BS_m\text{-\textbf{mod}})$ with the classical (non-quantized) Fock space $\CF_c= \bigoplus_{\lambda \in \CP} \BZ \lambda$ under the correspondence $[\Delta(\lambda)] \leftrightarrow \lambda$.

Recall the quantized Fock space $\CF$ introduced in \autoref{prelimFock}.  Let $A = \BZ[q,q^{-1}]$, and let $\CF_{A}$ be the free $A$-submodule of $\CF$ with basis $\CP$.  Then $G(\lambda) \in \CF_{A}$ for all $\lambda \in \CP$ by \autoref{T:appendix}.
Similarly, let $\CF_{s,A}$ denote the free $A$-submodule of $\CF_{s}$ with basis $\B_s$ for each $s \in \BZ$.  We have a surjective $\BZ$-linear map $\E : \CF_{\BZ,A} \to \CF_c$ defined by $a(q)B \mapsto a(1)\Par(B)$ for all $a(q) \in A$ and $B \in \B$.

Let $n= p$, and let $\U_A$ be the $A$-subalgebra of $\U$ generated by $\{ e_i, f_i, K_i^{\pm} \mid i = 0 ,1,\dotsc, p-1\}$.  Then for each $s \in \BZ$, $\CF_{s,A}$ is a $\U_A$-submodule of $\CF_s$.  Under the above identification of $K_0(\bigoplus_m \BS_m\text{-\textbf{mod}})$ with $\CF_c$, we have that $\E$, when restricted to $\CF_{s,A}$, intertwines the Chevalley generators $e_i$ and $f_i$ with the $(i-s)$-Res and $(i-s)$-Ind functors.  More precisely, we have
$$
\E \circ f_i  = (i-s)\text{-}\mathrm{Ind} \circ \E \qquad \text{and}\qquad
\E \circ e_i  = (i-s)\text{-}\mathrm{Res} \circ \E.
$$

\section{General setup} \label{setup}
In order to deal with the quantized Fock spaces and the classical non-quantized Fock space simultaneously, we consider the following general setup.

Let $R$ be an integral domain, and let $\mathfrak{F}$ be the free $R$-module with $R$-basis $\{ s(\lambda) \mid \lambda \in \CP \}$, indexed by the set $\CP$ of all partitions.  Let $\langle\, , \rangle$ denote the symmetric $R$-bilinear form on $\mathfrak{F}$ with respect to which $\{ s(\lambda) \mid \lambda \in \CP \}$ is orthonormal.

For each $\lambda \in \CP$, let $H(\lambda) \in \mathfrak{F}$ be such that
$\langle H(\lambda), s(\mu) \rangle \ne 0$ only if $\lambda \geq_{J_n} \mu$, and $\langle H(\lambda), s(\lambda) \rangle = 1$.  Then $\{ H( \lambda) \mid \lambda \in \CP \}$ is another $R$-basis of $\mathfrak{F}$.  We note the following easy lemma:

\begin{lemma} \label{easy}
Let $x = \sum_{\mu \in \CP} a_{\mu} H(\mu) \in \mathfrak{F}$, where $a_{\mu} \in R$ for all $\mu \in \CP$.  Let $\nu \in \CP$ be such that $a_{\mu} = 0$ whenever $\mu >_{J_n} \nu$. Then $\langle x, s(\nu) \rangle = a_{\nu}$.
\end{lemma}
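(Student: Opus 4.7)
The plan is a direct computation using bilinearity of $\langle\,,\,\rangle$ combined with the upper-triangularity of the basis change between $\{H(\lambda)\}$ and $\{s(\lambda)\}$ relative to the Jantzen order $\geq_{J_n}$.

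First, I would expand
\[
\langle x, s(\nu)\rangle = \sum_{\mu\in\CP} a_\mu\,\langle H(\mu), s(\nu)\rangle,
\]
using $R$-bilinearity of the form (note only finitely many $a_\mu$ are nonzero since $x\in\mathfrak{F}$). By the defining property of $H(\mu)$, the coefficient $\langle H(\mu), s(\nu)\rangle$ is zero unless $\mu \geq_{J_n} \nu$, so I can restrict the sum to those indices.

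Next, I would split this restricted sum into the case $\mu = \nu$ and the case $\mu >_{J_n} \nu$. For $\mu = \nu$ the pairing $\langle H(\nu), s(\nu)\rangle$ equals $1$ by hypothesis, contributing $a_\nu$. For $\mu >_{J_n} \nu$, the hypothesis on $x$ gives $a_\mu = 0$, so these terms vanish. Combining, $\langle x, s(\nu)\rangle = a_\nu$, as required.

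No step here is an obstacle; the lemma is essentially an unwinding of definitions, and the only thing to be careful about is that the condition $\mu \geq_{J_n} \nu$ from the triangularity lines up precisely with the complement of the hypothesis $\mu >_{J_n} \nu \Rightarrow a_\mu = 0$, the overlap being the single term $\mu = \nu$ on which the diagonal coefficient is $1$.
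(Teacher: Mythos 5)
Your proof is correct and is essentially identical to the paper's: both expand $\langle x, s(\nu)\rangle$ by bilinearity and kill every term with $\mu \ne \nu$ using either $\langle H(\mu),s(\nu)\rangle = 0$ (when $\mu \not\geq_{J_n} \nu$) or $a_\mu = 0$ (when $\mu >_{J_n} \nu$), leaving the diagonal term $a_\nu\langle H(\nu),s(\nu)\rangle = a_\nu$.
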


\begin{proof}
We have $a_{\mu} = 0$ if $\mu >_{J_n} \nu$, while $\langle H(\mu), s(\nu) \rangle = 0$ if $\mu \not\geq_{J_n} \nu$, so that
$$ \langle x, s(\nu) \rangle = \langle \sum_{\mu \in \CP} a_{\mu} H(\mu), s(\nu) \rangle = \langle a_{\nu}H(\nu),s(\nu) \rangle = a_{\nu}.$$
\end{proof}

Fix $n \in \BN$ and let $\mathfrak{r}$ be a residue class modulo $n$.  Let $\mathfrak{f}_{\mathfrak{r}} : \mathfrak{F} \to \mathfrak{F}$ be an $R$-linear map such that $\langle \mathfrak{f}_{\mathfrak{r}}(s(\lambda)), s(\mu) \rangle \ne 0$ only if $\lambda$ can be obtained from $\mu$ by removing a removable node of $n$-residue $\mathfrak{r}$, and equals $1$ if $\lambda$ is obtained by removing the rightmost such node which also lies to the right of all addable nodes of $n$-residue $\mathfrak{r}$.

\begin{prop} \label{geninductivelemma}
Let $\lambda \in \CP$, and suppose that its rightmost addable node of $n$-residue $\mathfrak{r}$ lies to the right of all its removable nodes of $n$-residue $\mathfrak{r}$, and let $\mu$ be the partition obtained by adding this node.  Then
$$
\mathfrak{f}_{\mathfrak{r}}(H(\lambda)) = H(\mu) + \sum_{\substack{\nu \in \CP \\ |\nu| = |\mu| \\ \nu < \mu}} a_{\nu}H(\nu),
$$
with $a_{\nu} \in R$ for all $\nu$.
\end{prop}

\begin{proof}
Note first that when we remove from $\mu$ its rightmost removable node of $n$-residue $\mathfrak{r}$, we obtain $\lambda$.

Since $\{ H(\nu) \mid \nu \in \CP\}$ is a basis for $\mathfrak{F}$, we have
$\mathfrak{f}_{\mathfrak{r}}(H(\lambda)) = \sum_{\nu \in \CP} a_{\nu}H(\nu)$, where $a_{\nu} \in R$ for all $\nu \in \CP$.  Since $\left< H(\rho), s(\sigma)\right> \ne 0$ only if $\rho \geq_{J_n} \sigma$, and the latter only if $|\rho| = |\sigma|$, we see that $a_{\nu} = 0$ unless $|\nu| = |\lambda| +1 = |\mu|$. Thus, we need to show that $a_{\mu} = 1$, and $a_{\nu} \ne 0$ only if $\nu \leq \mu$.

Let $\sigma \in \CP$ be maximal (with respect to $\geq$) subject to $a_{\sigma} \ne 0$.  Then, by \autoref{easy}, we have
\begin{align*}
(0 \ne)\: a_{\sigma} = \langle \mathfrak{f}_{\mathfrak{r}}(H(\lambda)), s(\sigma) \rangle
 &= \langle \mathfrak{f}_{\mathfrak{r}} \left(\sum_{\rho \in \CP} \langle H(\lambda),s(\rho) \rangle\, s(\rho)\right), s(\sigma) \rangle \\
 &= \sum_{\rho \in \CP} \langle H(\lambda),s(\rho) \rangle \langle \mathfrak{f}_{\mathfrak{r}}(s(\rho)), s(\sigma) \rangle.
\end{align*}
Thus there exists $\rho \in \CP$ such that $\langle H(\lambda), s(\rho) \rangle, \langle \mathfrak{f}_{\mathfrak{r}}(s(\rho)), \sigma \rangle \ne 0$; in particular, $\rho \leq \lambda$, $|\rho| = |\lambda|$, and $\rho$ can be obtained from $\sigma$ by removing a removable node of $n$-residue $\mathfrak{r}$.  This also shows that $|\sigma| = |\mu|$.

If $\rho = \lambda$, then $\sigma$ can be obtained by adding an addable node of $n$-residue $\mathfrak{r}$ to $\lambda$, so that since $\mu$ is obtained by adding the rightmost addable node of $\lambda$ of $n$-residue $\mathfrak{r}$, we have $\mu \geq \sigma$ (in fact, $\mu \geq_{J_n} \sigma$).

If $\rho \ne \lambda$, then $\rho < \lambda$.  Let $a,b \in \BN$ be such that
$\mu_i = \lambda_i + \delta_{ai}$, $\sigma_i = \rho_i + \delta_{bi}$ for all $i \in \BN$.  Also, let $k \in \BN$ be such that $\rho_i = \lambda_i$ for all $i< k$ and $\rho_k < \lambda_k$.
We consider the cases $a < k$, $a=k$ and $a>k$ separately.
\begin{description}
\item[$a<k$] Note first that in this case $a \leq b$: otherwise $\lambda_i = \rho_i$ for all $i \leq b$ while $\sigma_b = \rho_b+1$, so that $(b,\rho_b+1)=(b,\lambda_b+1)$ is an addable node of $\rho$ and $\lambda$ of $n$-residue $\mathfrak{r}$ lying to the right of $(a,\lambda_a+1)$, contradicting the latter being the rightmost addable node of $\lambda$ of $n$-residue $\mathfrak{r}$.  Thus $\mu_i = \lambda_i = \rho_i = \sigma_i$ for all $i < a$, and $\mu_a = \lambda_a+1 = \rho_a + 1 = \sigma_a + 1 - \delta_{ba}$.  Consequently either $\mu_a > \sigma_a$ or $a=b$; in the latter case $\mu_i = \lambda_i + \delta_{ai} = \rho_i + \delta_{bi} = \sigma_i$ for all $i < k$, and $\mu_k = \lambda_k > \rho_k = \sigma_k$.  Either way, we have $\mu > \sigma$.
\item[$a=k$] In this case, we have $\mu_i = \lambda_i = \rho_i$ for all $i < a = k$, and $\mu_a = \lambda_a +1 = \lambda_k + 1 > \rho_k + 1 = \rho_a + 1 = \sigma_a + 1 - \delta_{ba} \geq \sigma_a$.  Thus $\mu > \sigma$.
\item[$a>k$] In this case, we have $\mu_i = \lambda_i = \rho_i$ for all $i < k$, and $\mu_k = \lambda_k > \rho_k = \sigma_k - \delta_{bk}$.  Thus either $\mu_k > \sigma_k$ or both $b=k$ and $\mu_k = \sigma_k$.  In the latter case, the node $(k,\sigma_k) = (k,\mu_k)$ is a removable node of $\sigma$ whose removal produces $\rho$, so that it has $n$-residue $\mathfrak{r}$; but this cannot be a removable node of $\mu$ since $(a,\mu_a)$ is its rightmost removable node of $n$-residue $\mathfrak{r}$.  As such, $(k+1,\mu_k) \in [\mu] \setminus [\sigma]$.  This gives $\mu_{k+1} > \sigma_{k+1}$.  Thus, once again, either way, we get $\mu > \sigma$.
\end{description}

We thus conclude that $\mu \geq \sigma$ always.  To finish off the proof, it suffices to show that $a_{\mu} = 1$.
By \autoref{easy}, we have, as above,
$$
a_{\mu} = \langle \mathfrak{f}_{\mathfrak{r}}(H(\lambda)), s(\mu) \rangle = \sum_{\rho \in \CP} \langle H(\lambda),s(\rho) \rangle \langle \mathfrak{f}_{\mathfrak{r}}(s(\rho)), s(\mu) \rangle.
$$
For each $\rho$ such that $\langle \mathfrak{f}_{\mathfrak{r}}(s(\rho)), s(\mu) \rangle \ne 0$, $\rho$ must be obtained from $\mu$ by removing a removable node of $n$-residue $\mathfrak{r}$.  Since $\lambda$ is obtained by removing from $\mu$ its rightmost removable node of $n$-residue $\mathfrak{r}$, we see that $\lambda \leq \rho$ (in fact, $\lambda \leq_{J_n} \rho$), so that $\langle H(\lambda),s(\rho)\rangle = 0$ unless $\lambda = \rho$.  Thus,
$$
a_{\mu} = \langle H(\lambda),s(\lambda) \rangle \langle \mathfrak{f}_{\mathfrak{r}}(s(\lambda)), s(\mu) \rangle = 1.
$$
\end{proof}

\begin{rem}
\autoref{geninductivelemma} in fact holds even when $H(\lambda)$ satisfies the weaker condition that $\left< H(\lambda), s(\mu) \right> \ne 0$ only if $\lambda > \mu$ and $|\lambda| = |\mu|$.  The proof is similar, uses a modified version of \autoref{easy}.  We leave it to the reader as an easy exercise.
\end{rem}

Applying \autoref{geninductivelemma} to $\CF_c$ ($\cong K_0(\bigoplus_m \mathbf{S}_m\text{-\textbf{mod}})$) and $\CF_{A,s}$ in the place of $\mathfrak{F}$, and $i$-Ind and $f_{s+i}$ in the place of $\mathfrak{f}$, we get the following immediately corollary.

\begin{cor} \label{inductivecor}
Let $\lambda \in \CP$, and suppose that its rightmost addable node of $n$-residue $i$ lies to the right of all its removable nodes of $n$-residue $i$, and let $\mu$ be the partition obtained by adding this node.  Then
\begin{enumerate}
\item when $n= p$,
$$i\text{-}\mathrm{Ind}(P(\lambda)) \cong P(\mu) \oplus \bigoplus_{\substack{\nu \in \CP \\ |\nu| = |\mu| \\ \nu < \mu}} P(\nu)^{\oplus [i\text{-}\mathrm{Res} (L(\nu))\, :\, L(\lambda)]};$$
\item
$$f_{s+i}(G_s(\lambda)) = G_s(\mu) + \sum_{\substack{\nu \in \CP \\ |\nu| = |\mu| \\ \nu < \mu}} L^{\nu}_{\lambda}(q)\, G_s(\nu),$$
where $L^{\nu}_{\lambda}(q) \in \BZ[q,q^{-1}]$ for all $\nu$.
\end{enumerate}
\end{cor}

Suppose further that $\CP = \bigcup_{\Bt \in \BT} \CP_{\Bt}$ can be partitioned into subsets which are indexed by a set $\BT$, and that the indexing set $\BT$ can be partially ordered by $\geq$ so that if $\lambda \in \CP_{\Bt},\ \mu \in \CP_{\Bs}$ and $\lambda \geq_{J_n} \mu$, then $\Bt \geq \Bs$.  For each $\Bt \in \BT$, let $\mathfrak{F}_{\Bt}$ be the free $R$-submodule of $\mathfrak{F}$ with $R$-basis $\{ s(\lambda) \mid \lambda \in \CP_{\Bt} \}$.  Then $\mathfrak{F} = \bigoplus_{\Bt \in \BT} \mathfrak{F}_{\Bt}$; let $\pi_{\Bt} : \mathfrak{F} \to \mathfrak{F}$ denote the natural projection onto $\mathfrak{F}_{\Bt}$ via this decomposition.  We have, as immediate consequences of such an order on $\BT$, the following: for $\Bt \in \BT$ and $\lambda \in \CP_{\Bt}$,
\begin{itemize}
\item $H(\lambda) \in \bigoplus_{\Bs \leq \Bt} \mathfrak{F}_{\Bs}$;
\item if $\Bs \not\leq \Bt$, then $\pi_{\Bs}(H(\lambda)) = 0$.
\end{itemize}

\begin{lemma} \label{genind}
Let $\xi : \mathfrak{F} \to \mathfrak{F}$ be an $R$-linear map satisfying $\xi(\mathfrak{F}_{\Bs}) \subseteq \mathfrak{F}_{\Bs}$ for all $\Bs \in \BT$.  Let $\Bt \in \BT$ and $\lambda \in \CP_{\Bt}$.  Suppose that $\xi(H(\lambda)) = \sum_{\mu \in \CP} a_{\mu} H(\mu)$, where $a_{\mu} \in R$ for all $\mu \in \CP$.  Then
\begin{enumerate}
\item $a_{\mu} = 0$ for all $\mu \in \CP_{\Bs}$ with $\Bs \not\leq \Bt$;
\item $\xi (\pi_{\Bt}(H(\lambda)) = \pi_{\Bt}(\xi(H(\lambda))) = \sum_{\mu \in \CP_{\Bt}} a_{\mu} \pi_{\Bt} (H(\mu))$.
\end{enumerate}
\end{lemma}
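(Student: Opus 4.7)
The plan is to prove (1) by a maximality argument that exploits the triangularity of the $H$-basis together with the compatibility of $\geq_{J_n}$ with the partial order on $\BT$, and then to deduce (2) as a formal consequence of the fact that $\xi$ preserves each summand $\mathfrak{F}_\Bs$.

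For (1), I would start from the observation (already recorded in the two bullet points preceding the statement) that $H(\lambda) \in \bigoplus_{\Bu \leq \Bt} \mathfrak{F}_\Bu$, and hence, since $\xi$ preserves each $\mathfrak{F}_\Bu$, also $\xi(H(\lambda)) \in \bigoplus_{\Bu \leq \Bt} \mathfrak{F}_\Bu$. Consequently $\langle \xi(H(\lambda)), s(\nu)\rangle = 0$ for every $\nu \in \CP_\Bs$ with $\Bs \not\leq \Bt$. Now I would argue by contradiction: suppose some $a_\nu \neq 0$ with $\nu \in \CP_{\Bs_0}$ and $\Bs_0 \not\leq \Bt$, and choose such a $\nu$ maximal with respect to $\geq_{J_n}$. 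Expanding in the $s$-basis gives
\[ 0 \;=\; \langle \xi(H(\lambda)), s(\nu)\rangle \;=\; \sum_{\mu} a_\mu \langle H(\mu), s(\nu)\rangle, \]
and only $\mu \geq_{J_n} \nu$ contribute. For any such $\mu \in \CP_\Bu$ the hypothesis on $\BT$ forces $\Bu \geq \Bs_0$; if also $\Bu \leq \Bt$, then transitivity would give $\Bs_0 \leq \Bt$, contradicting the choice of $\Bs_0$. Hence $\Bu \not\leq \Bt$ for every surviving $\mu$, and the maximality of $\nu$ pins $\mu = \nu$. Since $\langle H(\nu), s(\nu)\rangle = 1$, this forces $a_\nu = 0$, the desired contradiction.

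For (2), write $H(\lambda) = \pi_\Bt(H(\lambda)) + h$ with $h := H(\lambda) - \pi_\Bt(H(\lambda)) \in \bigoplus_{\Bu < \Bt} \mathfrak{F}_\Bu$. Because $\xi$ preserves each $\mathfrak{F}_\Bu$, the element $\xi(\pi_\Bt(H(\lambda)))$ lies in $\mathfrak{F}_\Bt$ (hence is fixed by $\pi_\Bt$), while $\xi(h) \in \bigoplus_{\Bu < \Bt} \mathfrak{F}_\Bu$ (hence is annihilated by $\pi_\Bt$). Applying $\pi_\Bt$ to $\xi(H(\lambda)) = \xi(\pi_\Bt(H(\lambda))) + \xi(h)$ yields the first equality of (2). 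For the second, apply $\pi_\Bt$ termwise to $\xi(H(\lambda)) = \sum_\mu a_\mu H(\mu)$: by (1), the only possibly nonzero $a_\mu$ have $\mu \in \CP_\Bs$ with $\Bs \leq \Bt$; and for $\Bs < \Bt$, the inclusion $H(\mu) \in \bigoplus_{\Bu \leq \Bs} \mathfrak{F}_\Bu$ gives $\pi_\Bt(H(\mu)) = 0$ because $\Bt \not\leq \Bs$. Only $\mu \in \CP_\Bt$ remain, yielding the claimed expression.

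The main obstacle is the ordering bookkeeping in part (1), where one must combine the triangularity of the change-of-basis matrix between $\{H(\nu)\}$ and $\{s(\nu)\}$ (governed by $\geq_{J_n}$) with the refinement hypothesis linking $\geq_{J_n}$ to the $\BT$-order, so that an extremal counterexample in the wrong $\CP_\Bs$ collides with the vanishing $\langle \xi(H(\lambda)), s(\nu)\rangle = 0$. Once (1) is in hand, (2) is essentially formal.
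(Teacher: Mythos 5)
Your proof is correct and follows essentially the same route as the paper: part (1) by choosing a $\geq_{J_n}$-maximal offending $\nu$ and pairing $\xi(H(\lambda))\in\bigoplus_{\Bs\leq\Bt}\mathfrak{F}_{\Bs}$ against $s(\nu)$, and part (2) formally from the $\xi$-invariance of each $\mathfrak{F}_{\Bs}$. The only difference is cosmetic: you unpack the content of \autoref{easy} (and the transitivity step linking $\geq_{J_n}$ to the order on $\BT$) inline instead of citing it.
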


\begin{proof}
Since $H(\lambda) \in \bigoplus_{\Bs \leq \Bt} \mathfrak{F}_{\Bs}$, we have $\xi(H(\lambda)) \in \bigoplus_{\Bs \leq \Bt} \mathfrak{F}_{\Bs}$.  If there exists $\nu \in \CP_{\Bs}$ with $\Bs \not\leq \Bt$ such that $a_{\nu} \ne 0$, choose $\nu$ to be maximal with respect to $\geq_{J_n}$.  Then by \autoref{easy},
$$
a_{\nu} = \langle \xi(H(\lambda)), s(\nu) \rangle = 0,$$
since $\xi(H(\lambda)) \in \bigoplus_{\Bs \leq \Bt} \mathfrak{F}_{\Bs}$, a contradiction.  Thus, (1) holds, and hence
$\xi(H(\lambda)) = \sum_{\Bs \leq \Bt} \sum_{\mu \in \CP_{\Bs}} a_{\mu} H(\mu)$, so that
\begin{align*}
\pi_{\Bt} ( \xi(H(\lambda))) &= \pi_{\Bt} (\sum_{\Bs \leq \Bt} \sum_{\mu \in \CP_{\Bs}} a_{\mu} H(\mu)) \\
&= \sum_{\Bs \leq \Bt} \sum_{\mu \in \CP_{\Bs}} a_{\mu} \pi_{\Bt}(H(\mu)) \\
&= \sum_{\mu \in \CP_{\Bt}} a_{\mu} \pi_{\Bt}(H(\mu)),
\end{align*}
proving the second equality of (2).  The first equality of (2) follows from the fact that $\xi$ and $\pi_{\Bt}$ commute, since each $\mathfrak{F}_{\Bs}$ is invariant under $\xi$.
\end{proof}

\section{Restriction to parabolic subalgebra} \label{RestrictToU(n)}

Fix $n \in \BN$ and a tuple $\Bn = (n_1,\dotsc,n_r)$ of positive integers such that $n_1+ \dotsb + n_r = n$, and for each $j = 0, 1,\dotsc, r$, let $\sigma_j = \sum_{i=1}^j n_i$.

We wish to consider the restrictions of the action of
$\U$ on the Fock spaces $\CF_s$ to the parabolic subalgebra $\Up$, as defined in \autoref{qea}.
The restrictions have natural direct sum decompositions; to better understand
the summands, consider the example pictured in \autoref{f:exampleabacusblobs}.

\begin{figure}[h]
\begin{tikzpicture}[y=-1cm, scale=0.6, every node/.style={font=\scalefont{.8}}]

\def\w{0.4} 
\def\h{0.3} 
\foreach \y in {-2,...,3}
{
\path[rounded corners, fill=lightgray] (-\w,\y-\h) rectangle (3+\w, \y+\h);
\path[rounded corners, fill=lightgray] (4-\w,\y-\h) rectangle (5+\w, \y+\h);
\path[rounded corners, fill=lightgray] (6-\w,\y-\h) rectangle (8+\w, \y+\h);
};

\foreach \x in {0,...,8}
{\node [above] at (\x, -3) {$\x$};
\draw (\x,-2.5) --(\x,3.5);};

\foreach \y in {-2,...,3}
{\node [left] at (-1, \y) {$\y$};};

\foreach \x in
{(0,-2), (1,-2), (2,-2), (3,-2), (4,-2), (5,-2), (6,-2), (7,-2), (8,-2),
 (0,-1), (1,-1), (2,-1), (3,-1), (4,-1), (5,-1), (6,-1), (7,-1), (8,-1),
 (0, 0),         (2, 0), (3, 0),         (5, 0),         (7, 0),
         (1, 1), (2, 1),         (4, 1),                 (7, 1), (8, 1),
 (0, 2),                 (3, 2),                 (6, 2),         (8, 2)
}
{\draw [fill] \x circle [radius=0.2];};

\node [right] at (9,0) {$ v_{4,0}\wedge v_{4,2}\wedge v_{4,3} \enspace\otimes\enspace v_{2,1} \enspace\otimes\enspace v_{3,1}$};
\node [right] at (9,1) {$ v_{4,1}\wedge v_{4,2} \enspace\otimes\enspace v_{2,0} \enspace\otimes\enspace v_{3,1}\wedge v_{3,2}$};
\node [right] at (9,2) {$ v_{4,0}\wedge v_{4,3} \enspace\otimes\enspace v_{3,0}\wedge v_{3,2}$};
\end{tikzpicture}
\caption{A $9$-abacus display of the partition $\lambda=(13,12,10,8,8,8,6,5,5,3,2,1,1)$ and the corresponding
vector in
$\left(\bw{3} V_4 \otimes V_2 \otimes V_3\right)
\otimes \left(\bw2 V_4 \otimes V_2 \otimes \bw2 V_3\right)
\otimes \left(\bw2 V_4 \otimes \bw2 V_3\right)$
}
\label{f:exampleabacusblobs}
\end{figure}
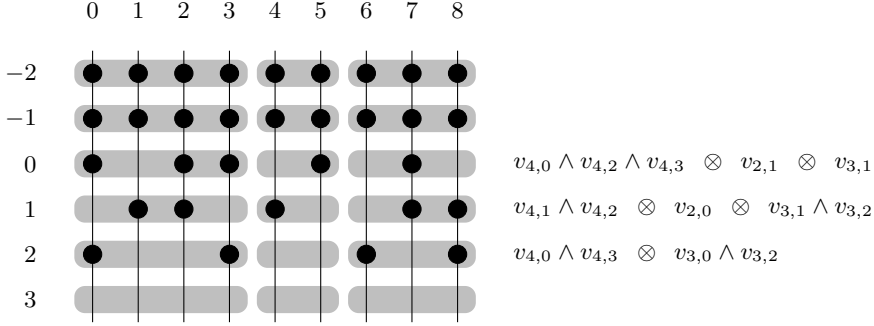

 In view of the description of the action in terms of the
abacus, it is natural to split the $n$-abacus into sections, each of which is represented by a gray region in \autoref{f:exampleabacusblobs}.  Formally, for each $i \in \BZ$ and $j \in \{1,\dotsc,r\}$, let
$$\BZ^{\Bn}_{i,j} = \{ x \in \BZ \mid \sigma_{j-1} \leq x-in < \sigma_{j}\}.$$
Given a subset $B$ of $\BZ$, we write
$$X^{\Bn}_{i,j}(B) = (B \cap \BZ^{\Bn}_{i,j}) - in - \sigma_{j-1} = \{ x-in -\sigma_{j-1} \mid x \in B \cap \BZ^{\Bn}_{i,j}\}.$$
Thus $X^{\Bn}_{i,j}(B)$ describes, in a normalized form, the beads lying in the section $\BZ^{\Bn}_{i,j}$ in the $n$-abacus display of $B$.  In addition, let $\t^{\Bn}_B : \BZ \times \{1,\dotsc,r\} \to \BN_0$ be defined by $\t^{\Bn}_B(i,j) = |B \cap \BZ^{\Bn}_{i,j}| = |X^{\Bn}_{i,j}(B)|$, so that it describes the number of beads lying in each section in the $n$-abacus display of $B$.  Note that $B$ is a set of $\beta$-numbers if and only if
$$
\sum_{\substack{i \geq 0 \\ 1 \leq j \leq r}} \t^{\Bn}_B(i,j) + \sum_{\substack{i < 0 \\ 1 \leq j \leq r}} (n_j - \t^{\Bn}_B(i,j)) < \infty,
$$ in which case
$$
\sum_{\substack{i \geq 0 \\ 1 \leq j \leq r}} \t^{\Bn}_B(i,j) - \sum_{\substack{i < 0 \\ 1 \leq j \leq r}} (n_j - \t^{\Bn}_B(i,j)) = \s(B).
$$

Let $\BT_\Bn$ be the set of functions $\Bt : \BZ \times \{1,\dotsc,r\} \to \BN_0$ such that $\Bt(i,j) \leq n_j$ for all $i \in \BZ$ and that $\sum_{\substack{i \geq 0 \\ 1 \leq j \leq r}} \Bt(i,j) + \sum_{\substack{i < 0 \\ 1 \leq j \leq r}} (n_j - \Bt(i,j)) < \infty$.  For each $\Bt \in \BT_\Bn$, let
$$
\s(\Bt) = \sum_{\substack{i \geq 0 \\ 1 \leq j \leq r}} \Bt(i,j) - \sum_{\substack{i < 0 \\ 1 \leq j \leq r}} (n_j - \Bt(i,j)).
$$
Then for each $B \in \B$, $\s(B) = \s(\t^{\Bn}_B)$.
For each $s \in \BZ$, let $\BT_{\Bn,s} = \{ \Bt \in \BT_\Bn \mid \s(\Bt) = s \}$.

We have a surjection from $\B$ to $\BT_{\Bn}$ defined by $B \mapsto \t^{\Bn}_B$, and this restricts to give a surjection from $\B_s$ to $\BT_{\Bn,s}$ for each $s \in \BZ$.  Given $\Bt \in \BT_\Bn$, let $\B_{\Bt} = \{ B \in \B \mid \t^{\Bn}_B = \Bt \}$.  Then the $\B_{\Bt}$'s partition $\B_s$ as $\Bt$ runs over $\BT_{\Bn,s}$.

Let $\Bt \in \BT_{\Bn}$ and write $s = \s(\Bt)$.  We now describe a characterisation of $B \in \B_{\Bt}$ in terms of $\Par(B)$.
Define $B_{\Bt} \subseteq \mathbb{Z}$ as follows: $X_{i,j}^{\Bn}(B_{\Bt}) = \{ 0,\dotsc, \Bt(i,j)-1\}$.  Then $B_{\Bt} \in \B_{\Bt}$, and write $\lambda_{\Bt}$ for $\Par(B_{\Bt})$.

\begin{prop}
Let $B \in \B_s$.  Then $B \in \B_{\Bt}$ if and only if $\Par(B)$ and $\lambda_{\Bt}$ have the same sets of nodes of $n$-residue $\sigma_j-s$ for all $j=0,\dotsc, r-1$.
\end{prop}

\begin{proof}
Note that $B_{\Bt}$ is the unique element in $\B_{\Bt}$ such that $|\Par(B_{\Bt})|$ is the least.  We prove by induction on $|\Par(B)|$.  If $|\Par(B)| \leq |\lambda_{\Bt}|$, then $B \in \B_{\Bt}$ if and only if $B = B_{\Bt}$.  On the other hand, $\lambda_{\Bt}$ is characterised by its sets of nodes of $n$-residue $\sigma_j-s$ for all $j=0,\dotsc, r-1$ and that all its removable nodes lie among these.  As such, any partition $\mu$ having the same sets of nodes of $n$-residue $\sigma_j-s$ as $\lambda_{\Bt}$ for all $j=0,\dotsc, r-1$ must have $[\mu] \supseteq [\lambda]$ as otherwise $\lambda$ would have a removable node not in $[\mu]$.  Thus, if $|\Par(B)| \leq |\lambda_{\Bt}|$, then $\Par(B)$ has the same sets of nodes of $n$-residue $\sigma_j-s$ as $\lambda_{\Bt}$ for all $j=0,\dotsc, r-1$ if and only if $\Par(B) = \lambda_{\Bt}$, or equivalently $B= B_{\Bt}$.  Thus the base case for the induction holds.

Suppose that $|\Par(B)| > |\lambda_{\Bt}|$.  If $B \in \B_{\Bt}$, then there exists $x \in B$, $x-1 \notin B$ and $x \not\equiv_n \sigma_j$ for all $j$.  Let $C = B \cup \{x-1\} \setminus \{ x\}$.  Then $C \in \B_{\Bt}$, and $\Par(C)$ is obtained from $\Par(B)$ by removing a node of $n$-residue $x-s$.  By induction, $\Par(C)$ has the same sets of nodes of $n$-residue $\sigma_j-s$ as $\lambda_{\Bt}$ for all $j=0,\dotsc, r-1$, and thus, so does $\Par(B)$.  On the other hand, if $\Par(B)$ has the same sets of nodes of $n$-residue $\sigma_j-s$ as $\lambda_{\Bt}$ for all $j=0,\dotsc, r-1$, then since $|\Par(B)| > |\lambda_{\Bt}|$, we see that $\Par(B)$ must have some removable node of $n$-residue not equal to $\sigma_j-s$ for all $j=0,\dotsc, r-1$.  Let $\mu$ be the partition obtained by removing this node.  Then $\t^{\Bn}_{\beta_s(\mu)} = \t^{\Bn}_{B}$, and $\mu$ has the same sets of nodes of $n$-residue $\sigma_j-s$ as $\lambda_{\Bt}$ for all $j=0,\dotsc, r-1$, so that by induction, $\Bt = \t^{\Bn}_{\beta_s(\mu)} = \t^{\Bn}_{B}$, and we are done.
\end{proof}

We get the following immediate corollary.

\begin{cor} \label{characterisation}
Let $\lambda,\mu \in \CP$ and $s \in \mathbb{Z}$.  Then $\t^{\Bn}_{\beta_s(\lambda)} = \t^{\Bn}_{\beta_s(\mu)}$ if and only if $\lambda$ and $\mu$ have the same sets of nodes of $n$-residue $\sigma_j -s$ for all $j = 0,\dotsc, r-1$.
\end{cor}

For each $\Bt \in \BT_{\Bn}$, let $\CF_{\Bt}$ be the $\mathbb{C}(q)$-vector space with basis $\B_{\Bt}$.  Then $\CF_{\Bt}$ is a vector subspace of $\CF_s$ if $\s(\Bt) = s$, and is invariant under the action of $\Up$.  Moreover, $\CF_s = \bigoplus_{\Bt \in \BT_{\Bn,s}} \CF_{\Bt}$.

In the special case where $\Bn = (n)$, we use the following notational conventions:
\begin{itemize}
\item $\BT_{(n)} = \{ \Bt : \BZ \to \{0,1,\dotsc,n\} \mid  \sum_{i\geq0} \Bt(i) +\sum_{i<0} (n - \Bt(i)) < \infty\}$.
\item If $\Bt \in \BT_{(n)}$, then $\s(\Bt) = \sum_{i\geq 0} \Bt(i) - \sum_{i<0} (n - \Bt(i))$.
\item If $B \subseteq \BZ$, then $X^{(n)}_i(B) = \{ x-in \mid x \in B,\ in \leq x < (i+1)n \}$ for all $i \in \BZ$, and $\t^{(n)}_{B} : \BZ \to \{0,1,\dotsc, n\}$ sends $i$ to $|X^{(n)}_i(B)|$.
\end{itemize}

We now proceed to identify $\CF_{\Bt}$ as a $\Up$-module. Let $V_m$ denote the natural module
for $U_q(\Gsl_m) = \left\langle e_1, \ldots, e_{m-1}; f_1, \ldots, f_{m-1};
K_1^{\pm},\ldots, K_{m-1}^{\pm}
\right\rangle$.  It has distinguished basis $v_{m,0}, \ldots v_{m,m-1}$, with action given by
$$
e_iv_{m,j} = \delta_{i,j} v_{m,j-1}, \qquad  f_i v_{m,j}  = \delta_{i-1,j} v_{m,j+1}$$
and
$$
K_i^+v_{m,j} =
\begin{cases}
qv_{m,j} & \text{if } j=i-1 \\
q^{-1}v_{m,j} & \text{if } j=i \\
v_{m,j} & \text{otherwise}.
\end{cases}
$$
The exterior power $\bw{d} V_m$ is an irreducible $U_q(\Gsl_m)$-module; its canonical basis is given by $\{v_{m,X} :=v_{m,i_1}\wedge\ldots \wedge v_{m,i_d} \mid X=\left\{i_1<\ldots<i_d\right\} \in \P_{m,d} \}$, where here and hereafter $\P_{m,d}$ denotes the set of $d$-element subsets of $\{0,1,\dotsc,m-1\}$. So
\begin{align*}
e_iv_{m,X}&=
\begin{cases}
v_{m,X\cup\{i-1\}\setminus\{i\}} & \text{if } i\in X \text{ and } i-1 \notin X \\
0 & \text{otherwise},
\end{cases}
\\
f_iv_{m,X}&=
\begin{cases}
v_{m,X\cup\{i\}\setminus\{i-1\}} & \text{if } i-1\in X \text{ and } i \notin X \\
0 & \text{otherwise},
\end{cases}
\\
K_i^+v_{m,X}&=
\begin{cases}
	qv_{m,X} & \text{if } i-1\in X \text{ and } i \notin X \\
	q^{-1} v_{m,X} & \text{if } i\in X \text{ and } i-1 \notin X \\
	v_{m,X} & \text{otherwise}.
\end{cases}
\end{align*}
It follows that for any tuple $\Bd = (d_1,\dotsc,d_r)$ of integers with $0 \leq d_j \leq n_j$ for all $j$, $\bw{\Bd}_{\:\Bn}(V) : = \bw{d_1} V_{n_1} \otimes \ldots
\otimes \bw{d_r} V_{n_r}$ is an irreducible $\Up$-module with
canonical basis $$C_{\Bd}=\{v_{n_1,X_1}\otimes\dotsb\otimes
v_{n_r,X_r} \mid X_j \in \P_{n_j,d_j}\ \forall j \}.$$

\begin{lemma} \label{isom}
Let $\Bt \in \BT_\Bn$.  Write $\bw{\Bt}(V)$ for $\bigotimes_{i \in \BZ,\, 1\leq j \leq r} \bw{\Bt(i,j)} V_{n_j}$.
\begin{enumerate}
\item The set $\left\{ \bigotimes_{i \in \BZ, 1 \leq j \leq r} v_{n_i,X_{i,j}} \mid X_{i,j} \in \P_{n_j,\Bt(i,j)}\ \forall i,j \right\}$ is a basis for $\bw{\Bt}(V)$.
\item The map $\B_{\Bt} \to \prod_{i \in \BZ, 1 \leq j \leq r} \P_{n_j,\Bt(i,j)}$ defined by $B \mapsto (X^{\Bn}_{i,j}(B))_{i \in \BZ, 1 \leq j \leq r}$ for all $B \in \B_{\Bt}$ is bijective.
\item The $\BC(q)$-linear map $\Theta_{\Bt} : \CF_{\Bt} \to \bw{\Bt}(V)$ defined by $B \mapsto \bigotimes_{i \in \BZ, 1 \leq j \leq r} v_{n_j,X^{\Bn}_{i,j}(B)}$ for all $B \in \B_{\Bt}$ is bijective, and is an isomorphism of $\Up$-modules if $\bw{\Bt}(V)$ is viewed as
    $$
    \dotsb \otimes \bw{\Bt_{-1}}_{\:\Bn}(V) \otimes \bw{\Bt_0}_{\:\Bn}(V) \otimes \bw{\Bt_1}_{\:\Bn}(V) \otimes \dotsb,
    $$ where $\Bt_i = (\Bt(i,1),\dotsc, \Bt(i,r))$ for all $i \in \BZ$.
\end{enumerate}
\end{lemma}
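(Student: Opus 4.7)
Part (1) is a standard consequence of the recollection on based modules: the canonical basis of each $\bw{d}V_m$ is $\{v_{m,X} \mid X \in \P_{m,d}\}$, and taking the external tensor product gives a basis of the required form for $\bw{\Bt}(V)$, noting that for all but finitely many pairs $(i,j)$ the factor $\bw{\Bt(i,j)}V_{n_j}$ is one-dimensional (corresponding to $\Bt(i,j)\in\{0,n_j\}$) so the infinite tensor product is well defined.

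For part (2), I would define the inverse explicitly by
$$(X_{i,j})_{i\in\BZ,\ 1\le j\le r}\ \longmapsto\ \bigcup_{i,j}\bigl(X_{i,j}+in+\sigma_{j-1}\bigr).$$
An easy check (using $X_{i,j}\subseteq\{0,\dotsc,n_j-1\}$ and $|X_{i,j}|=\Bt(i,j)$) shows that the resulting subset $B$ lies in $\B$, satisfies $\s(B)=\s(\Bt)$ and has $\t^\Bn_B=\Bt$; and that the two maps are mutually inverse.

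For part (3), parts (1) and (2) together imply that $\Theta_\Bt$ sends a $\BC(q)$-basis bijectively onto a $\BC(q)$-basis, so it is a $\BC(q)$-linear isomorphism. It remains to verify $\Up$-equivariance on the Chevalley generators $e_i,f_i,K_i^{\pm}$ with $i\neq\sigma_k$ for any $0\le k\le r$. Fix such an $i$ and let $j^\star$ be the unique index with $\sigma_{j^\star-1}<i<\sigma_{j^\star}$. The key point is that a bead move from $x-1$ to $x$ with $x\equiv_n i$ keeps both positions inside the same section $\BZ^\Bn_{i^\star,j^\star}$ (where $i^\star=(x-i)/n$); hence $B\to_{\B_\Bt} C$, and under $\Theta_\Bt$ this corresponds precisely to an $f_i$-move inside the $(i^\star,j^\star)$-tensor factor. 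Iterating the coproduct $\Delta(f_i)=f_i\otimes K_i^++1\otimes f_i$ across the tensor product places a $K_i^+$ on every factor strictly to the right of the one hit by $f_i$. Only the factors of type $(i',j^\star)$ with $i'>i^\star$ contribute nontrivially (the others are annihilated by $K_i^+$), and the exponent they contribute is
$$\epsilon(i',j^\star)=[i'n+(i-1)\in B]-[i'n+i\in B],$$
while the unique positions congruent to $i-1$ or $i$ and greater than $x$ lie in $\bigsqcup_{i'>i^\star}\BZ^\Bn_{i',j^\star}$. Summing $\epsilon(i',j^\star)$ over $i'>i^\star$ yields exactly $N_>(B,C)$, matching the Fock space formula. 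A symmetric verification, using $\Delta(e_i)=e_i\otimes 1+K_i^-\otimes e_i$, handles $e_i$ and gives $N_<(B,C)$; the $K_i^\pm$ case is immediate from the weight computation $N_i(B)=\sum_{i',j^\star}\epsilon(i',j^\star)$. This compatibility of coefficients is the main technical obstacle, and everything else is routine bookkeeping.
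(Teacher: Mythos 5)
Correct, and this is precisely the ``straightforward check'' that the paper's one-line proof leaves to the reader (with the abacus figure as a guide): your bookkeeping of the iterated coproduct against $N_>(B,C)$, $N_<(B,C)$ and $N_i(B)$, together with the explicit inverse in (2), is the intended verification. One small wording fix: the factors in sections $j\neq j^{\star}$ (and the ones to the right in row $i^{\star}$) are not \emph{annihilated} by $K_i^{+}$ --- it acts on them as the identity, which is what your computation actually uses; if it annihilated them, those terms of the iterated coproduct would vanish.
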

Note that when $i \gg 0$ or $i \ll 0$, the component $\bw{\Bt(i,j)} V_{n_j}$, and hence $\bw{\Bt_i}_{\:\Bn}(V)$, is canonically isomorphic to the trivial module $\BC(q)$, so the tensor product $\bw{\Bt}(V)$ is essentially a finite one.

\begin{proof}
This is straightforward to check. The reader may find
\autoref{f:exampleabacusblobs} useful.
\end{proof}

Each of the irreducible $\Up$-modules
 $\bw{\Bd}_{\:\Bn}(V)$
with its canonical basis $C_{\Bd} = \{ v_{\BX} = v_{n_1,X_1} \otimes \dotsb \otimes v_{n_r,X_r} \mid \BX = (X_j)_{1 \leq j \leq r} \in \prod_{j=1}^r \P_{n_j,d_j} \}$ is a based module.
Hence, by Lusztig's theory, the tensor
product $\bw{\Bt}(V)  = \dotsb \otimes \bw{\Bt_{-1}}_{\:\Bn}(V) \otimes \bw{\Bt_0}_{\:\Bn}(V) \otimes \bw{\Bt_1}_{\:\Bn}(V) \otimes \dotsb$ is a based module with basis
$$\mathbf{C}^{\tp} =  \dotsb \tp C_{\Bt_{-1}} \tp C_{\Bt_0} \tp C_{\Bt_1} \tp \dotsb,$$
while $\mathbf{C}^{\otimes} = \dotsb \otimes C_{\Bt_{-1}} \otimes C_{\Bt_0} \otimes C_{\Bt_1} \otimes \dotsb$ is the basis given in \autoref{isom}(1).  For each $B \in \B_{\Bt}$, let $\BX_i(B) = (X^{\Bn}_{i,j}(B))_{1\leq j \leq r} \in \prod_{j=1}^{r} \P_{n_j,\Bt(i,j)}$ for each $i \in \BZ$, and let
\begin{align*}
v_B^{\otimes} &= \dotsb \otimes v_{\BX_{-1}(B)} \otimes v_{\BX_0(B)} \otimes v_{\BX_1(B)} \otimes \dotsb \in \mathbf{C}^{\otimes},\\
v_B^{\tp} &= \dotsb \tp v_{\BX_{-1}(B)} \tp v_{\BX_0(B)} \tp v_{\BX_1(B)} \tp \dotsb \in \mathbf{C}^{\tp}.
\end{align*}
Then $\mathbf{C}^{\otimes} = \{ v_B^{\otimes} \mid B \in \B_{\Bt} \} = \{ \Theta_{\Bt}(B) \mid B \in \B_{\Bt} \}$ by \autoref{isom}(3).  Also, $\mathbf{C}^{\tp} = \{v_B^{\tp} \mid B \in \B_{\Bt} \}$.

For each $B \in \B_{\Bt}$, let $G_{\Bt}(B) = \Theta_{\Bt}^{-1} (v_B^{\tp})$.  Denote by $\psi_{\Bt}$ the $\BC(q)$-semilinear involution of $\CF_{\Bt}$ fixing $G_{\Bt}(B)$ for each $B \in \B_{\Bt}$.

Let $s = \s(\Bt)$.  Recall that $\beta_s : \CP \to \B_s$ is a bijection, with inverse $\Par$.
Let $\CP_{\Bt} = \Par(\B_{\Bt})$, so that the following three statements are equivalent:
$$
(1)\ \mu \in \CP_{\Bt}; \qquad \qquad  (2)\ \beta_s(\mu) \in \B_{\Bt}; \qquad \qquad (3)\ \t_{\beta_s(\mu)} = \Bt.
$$
For each $\mu \in \CP_{\Bt}$, we write $G_{\Bt}(\mu)$ for $G_{\Bt}(\beta_{s}(\mu))$.
In addition define $d^{\,\Bt}_{\lambda\mu}(q) \in \BC(q)$ for $\lambda, \mu \in \CP_{\Bt}$ by
$$G_{\Bt}(\mu)=\sum_{\lambda \in\CP_{\Bt}} d^{\,\Bt}_{\lambda\mu}(q)\beta_{s}(\lambda).$$

From \autoref{l:canonicaltensorproduct}(1) we deduce the following.
\begin{lemma}\label{l:tensorbasistriangular}
Let $\lambda, \mu \in \CP_{\Bt}$.
\begin{enumerate}
\item $d^{\,\Bt}_{\mu \mu}(q) = 1$.
\item If $\lambda \ne \mu$, then $d^{\,\Bt}_{\lambda \mu}(q) \in q\BZ[q]$.
\item If $d^{\,\Bt}_{\lambda \mu}(q) \ne 0$, then $\mu \geq \lambda$.
\item If $m \in \CF_{\Bt}$ satisfies $\psi_{\Bt}(m) = m$ and $m - \beta_s(\mu) \in \sum_{B \in \B_{\Bt}} q\BZ[q] B$, then $m = G_{\Bt}(\mu)$.
\end{enumerate}
\end{lemma}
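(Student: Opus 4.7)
The proof rests on Lemma~\ref{l:canonicaltensorproduct}(1), applied to the factors $\bw{\Bt_i}_{\:\Bn}(V)$ ($i \in \BZ$, only finitely many of which are non-trivial), viewed as based $\Up$-modules. For each $B \in \B_{\Bt}$ it gives
$$v_B^{\tp} - v_B^{\otimes} \in \bigoplus_{\Bc < \Bb} q\BZ[q]\, v_{\Bc}^{\otimes},$$
where $<$ is the reverse-lexicographic order on weight tuples. Applying $\Theta_{\Bt}^{-1}$ and using the identification $\Theta_{\Bt}(B) = v_B^{\otimes}$ from \autoref{isom}(3), this translates, for each $\mu \in \CP_{\Bt}$, into
$$G_{\Bt}(\mu) = \beta_s(\mu) + \sum_{\substack{C \in \B_{\Bt} \\ C < \beta_s(\mu)}} q_C(q)\, C, \qquad q_C(q) \in q\BZ[q],$$
where $C < B$ abbreviates $\Bc < \Bb$ for the corresponding tuples. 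Parts~(1) and~(2) are immediate by reading off the coefficient of $\beta_s(\lambda)$.

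For part~(3) it suffices to show that $\Bc < \Bb$ implies $\Par(C) < \Par(B)$ in the lexicographic order on $\CP$. Unpacking the definition, $\Bc < \Bb$ says there is $i_0 \in \BZ$ such that $\BX_i(C) = \BX_i(B)$ for all $i > i_0$ while $\wt(\BX_{i_0}(C)) > \wt(\BX_{i_0}(B))$ in $\Up$. Since $\Up$ is a product of $r$ simple factors, this weight inequality splits coordinatewise into $\wt(X^{\Bn}_{i_0,j}(C)) \geq \wt(X^{\Bn}_{i_0,j}(B))$ in each $U_q(\Gsl_{n_j})$, strict for at least one $j$. A direct root-lattice calculation in $\bw{d} V_m$ shows that $\wt(v_{m,Y}) \geq \wt(v_{m,Y'})$ iff the sorted entries of $Y$ are pointwise $\leq$ those of $Y'$; translated to the abacus this means that in every runner $j$ the beads of $B$ in $\BZ^{\Bn}_{i_0, j}$ are pointwise (after decreasing sorting) $\geq$ those of $C$. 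Combining the $r$ runners at level $i_0$ via a cumulative-count argument---for every threshold $t$, runner-wise pointwise dominance gives $|\{a \in B \cap \BZ^{\Bn}_{i_0,j} : a \geq t\}| \geq |\{a \in C \cap \BZ^{\Bn}_{i_0,j} : a \geq t\}|$, and summing over $j$ preserves the inequality (strict for some $t$)---then shows that the decreasingly sorted list of $B \cap \BZ^{\Bn}_{i_0,*}$ dominates that of $C \cap \BZ^{\Bn}_{i_0,*}$ pointwise, with strict inequality somewhere. Combined with the equality of the blocks $B \cap \BZ^{\Bn}_{i,*} = C \cap \BZ^{\Bn}_{i,*}$ at levels $i > i_0$, this yields $\overline{B} > \overline{C}$ lexicographically, hence $\Par(B) > \Par(C)$.

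Part~(4) is the standard uniqueness argument. Put $m' = m - G_{\Bt}(\mu)$. By~(1) and (2), $G_{\Bt}(\mu) - \beta_s(\mu) \in \sum_{B \in \B_{\Bt}} q\BZ[q]\, B$, so the hypothesis on $m$ forces $m' \in \sum_{B \in \B_{\Bt}} q\BZ[q]\, B$ as well, and plainly $\psi_{\Bt}(m') = m'$. Suppose for contradiction $m' \neq 0$, and expand $m' = \sum_{\nu \in \CP_{\Bt}} c'_{\nu}(q) G_{\Bt}(\nu)$. Choose $\nu_0$ maximal (with respect to $\geq$) with $c'_{\nu_0}(q) \neq 0$. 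By parts~(1) and~(3), only $\nu = \nu_0$ contributes to the $\beta_s(\nu_0)$-coefficient of $m'$, so that coefficient equals $c'_{\nu_0}(q)$; hence $c'_{\nu_0}(q) \in q\BZ[q]$. But bar-invariance of $m'$ and each $G_{\Bt}(\nu)$ forces $c'_{\nu_0}(q) = c'_{\nu_0}(q^{-1})$, whence $c'_{\nu_0} = 0$, contradiction.

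The principal obstacle is part~(3): matching Lusztig's reverse-lexicographic weight order on iterated tensor products of based modules with the lexicographic order on $\beta$-sequences. The crux is the two-step translation---first that within a single exterior power the root-lattice order coincides with pointwise comparison of sorted bead positions in a runner, and second that coordinatewise dominance across runners aggregates via the cumulative-count argument into pointwise dominance of the combined sorted list of beads at each abacus level.
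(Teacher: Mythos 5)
Your proof is correct and follows essentially the same route as the paper, which simply deduces the lemma from \autoref{l:canonicaltensorproduct}(1) (plus the standard bar-invariance uniqueness argument for part (4)) without writing out the details. The order-comparison you spell out in part (3) -- translating Lusztig's reverse-lexicographic weight order into lexicographic comparison of $\beta$-sequences via the cumulative-count criterion, using that the exterior-power factors are weight-multiplicity-free -- is exactly the verification the paper leaves to the reader.
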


\begin{prop} \label{tpcan}
Let $\Bt \in \BT_{\Bn}$.  For each $j = 1,\dotsc,r$, define $\Bt_j : \BZ \to \BN_0$ by $\Bt_j(i) = \Bt(i,j)$.
\begin{enumerate}
\item For each $j = 1,\dotsc, r$, $\Bt_j \in \BT_{(n_j)}$.  In addition, $\sum_{j=1}^r \s(\Bt_j) = \s(\Bt)$.
\item The $\BC(q)$-linar map $\Phi_{\Bt}: \CF_{\Bt} \to \CF_{\Bt_1} \otimes \dotsb \otimes \CF_{\Bt_r}$ defined by $\Phi_{\Bt}(B) = B_1 \otimes \dotsb \otimes B_r$ for all $B \in \B_{\Bt}$, where $B_j = \{ x + in_j \mid i \in \BZ,\ x\in X^{\Bn}_{i,j}(B) \}$ for all $j$, is a well-defined isomorphism of $\Up$-modules (where we identify $\Up$ with $U_q(\mathfrak{sl}_{n_1}) \otimes \dotsb \otimes U_q(\mathfrak{sl}_{n_r})$ in the obvious way), and
    $$
    \Phi_{\Bt}(G_{\Bt}(B)) = G_{\Bt_1}(B_1) \otimes \dotsb \otimes G_{\Bt_r}(B_r)$$
    for all $B \in \B_{\Bt}$.
\end{enumerate}
\end{prop}

\begin{proof}
We show only the last equality; all other assertions can be easily verified.

By \autoref{isom}(2), we have, for each $j = 1,\dotsc, r$, a bijective $\BC(q)$-linear map (actually an isomorphism of $U_q(\mathfrak{sl}_{n_j})$-modules) $\Theta_{\Bt_j} : \CF_{\Bt_j} \to \bigotimes_{i \in \BZ} \bw{\Bt_j(i)} V_{n_j}$ sending $B_j$ to $\bigotimes_{i \in \BZ} v_{n_j, X^{(n_j)}_i(B_j)}$ for all $B_j \in \B_{\Bt_j}$.  Since
{\allowdisplaybreaks
\begin{align*}
\left((\Theta_{\Bt_1}^{-1} \otimes \dotsb \otimes \Theta_{\Bt_r}^{-1}) \circ \Theta_{\Bt}\right)(B) &=
(\Theta_{\Bt_1}^{-1} \otimes \dotsb \otimes \Theta_{\Bt_r}^{-1}) \left( \bigotimes_{\substack{i\in \BZ \\ 1\leq j \leq r}} v_{n_j, X^{\Bn}_{i,j}(B)} \right) \\
&= \bigotimes_{j=1}^r \Theta_{\Bt_j}^{-1} \left(\bigotimes_{i \in \BZ} v_{n_j, X^{(n_j)}_i(B_j)}\right) = \bigotimes_{j=1}^r B_j = \Phi_{\Bt}(B)
\end{align*}
for all $B \in \B_{\Bt}$, we see that $\Phi_{\Bt} = (\Theta_{\Bt_1}^{-1} \otimes \dotsb \otimes \Theta_{\Bt_r}^{-1}) \circ \Theta_{\Bt}$.  Thus
\begin{align*}
\Phi_{\Bt}(G_{\Bt}(B)) &= (\Theta_{\Bt_1}^{-1} \otimes \dotsb \otimes \Theta_{\Bt_r}^{-1})(\Theta_{\Bt} (G_{\Bt}(B))) \\
&= (\Theta_{\Bt_1}^{-1} \otimes \dotsb \otimes \Theta_{\Bt_r}^{-1})(v_B^{\tp}) \\
&= (\Theta_{\Bt_1}^{-1} \otimes \dotsb \otimes \Theta_{\Bt_r}^{-1})\left(\bigtp_{i \in \BZ} \left(\bigotimes_{j=1}^r v_{n_j, X^{\Bn}_{i,j}(B)}\right)\right) \\
&= (\Theta_{\Bt_1}^{-1} \otimes \dotsb \otimes \Theta_{\Bt_r}^{-1}) \left( \bigotimes_{j=1}^r \left(\bigtp_{i \in \BZ} v_{n_j, X^{(n_j)}_i(B_j)} \right) \right) \\
&= \bigotimes_{j=1}^r \Theta_{\Bt_j}^{-1} \left(\bigtp_{i \in \BZ} v_{n_j, X^{(n_j)}_i(B_j)}\right) \\
&= \bigotimes_{j=1}^r G_{\Bt_j}(B_j),
\end{align*}
}
using \autoref{P:internal-external} for the fourth equality.
\end{proof}

\section{Comparison of canonical bases} \label{compare}

Fix $n \in \BN$ and a tuple $\Bn = (n_1,\dotsc,n_r)$ of positive integers such that $n_1+ \dotsb + n_r = n$.  Recall our subalgebra $\Up$ of $\U$ and our set $\BT_\Bn$ defined in the last section.  We define a partial order $\geq$ on $\BT_\Bn$ as follows:  for $\Bt, \Bt' \in \BT_\Bn$, we have $\Bt > \Bt'$ if and only if $\s(\Bt) = \s(\Bt')$ and there exists $(i_0,j_0) \in \mathbb{Z} \times \{1,\dotsc, r\}$ such that $\Bt(i_0,j_0) > \Bt'(i_0,j_0)$ and 
$\Bt(i,j) = \Bt'(i,j)$ for all $(i,j)$ with $i>i_0$ or both $i= i_0$ and $j>j_0$.
Note that $\geq$ restricts to a total order on $\BT_{\Bn,s}$ for each $s \in \BZ$.

\begin{lemma} \label{L:order}
Let $\lambda,\mu \in \CP$.  If $\lambda \geq_{J_n} \mu$, then $\t^{\Bn}_{\beta_s(\lambda)} \geq \t^{\Bn}_{\beta_s(\mu)}$ for all $s \in \BZ$.

\end{lemma}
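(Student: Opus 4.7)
The plan is to reduce the inequality to the case of a single Jantzen move $B \to_n B'$ in $\B_s$ and then analyse its effect on $\t^{\Bn}$ cell by cell. Since $\geq_{J_n}$ is the transitive closure of $\to_n$, and $\geq$ on $\BT_\Bn$ is transitive, once we establish that $B \to_n B'$ implies $\t^{\Bn}_B \geq \t^{\Bn}_{B'}$, the general statement follows by iterating along any chain $\beta_s(\lambda) = B_0 \to_n \dotsb \to_n B_t = \beta_s(\mu)$.

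For the single-step analysis, write $B' = B \setminus \{a,b\} \cup \{a-in, b+in\}$ with $a > b+in$ and $a-in, b+in \notin B$. For $x \in \BZ$, let $\rho(x) = \lfloor x/n \rfloor$ be the row and $j(x) \in \{1,\dotsc,r\}$ be the section index determined by $\sigma_{j(x)-1} \leq x - n\rho(x) < \sigma_{j(x)}$; set $\rho_a = \rho(a)$, $\rho_b = \rho(b)$, $j_a = j(a)$, $j_b = j(b)$. The key observation is that $a$ and $a-in$ lie on the same runner, so $a-in$ occupies the cell $(\rho_a - i, j_a)$; similarly $b+in$ occupies $(\rho_b + i, j_b)$. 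Passing from $B$ to $B'$ therefore contributes $+1$ to $\t^{\Bn}_B - \t^{\Bn}_{B'}$ at the cells $(\rho_a, j_a)$ and $(\rho_b, j_b)$ and $-1$ at the cells $(\rho_a - i, j_a)$ and $(\rho_b + i, j_b)$, with contributions summed whenever cells coincide.

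From $a > b+in$ one deduces first that $\rho_a \geq \rho_b + i$, and second that when equality holds, $a$ and $b+in$ sit in the same row with $a$ in a strictly larger column, forcing $j_a \geq j_b$. Since the order on $\BT_\Bn$ is reverse lexicographic in $(i,j)$, the comparison of $\t^{\Bn}_B$ and $\t^{\Bn}_{B'}$ is decided at the topmost affected cell. If either $\rho_a > \rho_b + i$, or $\rho_a = \rho_b + i$ and $j_a > j_b$, then $(\rho_a, j_a)$ is strictly the topmost affected cell and carries a $+1$ contribution, giving $\t^{\Bn}_B > \t^{\Bn}_{B'}$.

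The delicate case, and the main obstacle to a purely monotone argument, is $\rho_a = \rho_b + i$ together with $j_a = j_b$. Here the four affected cells collapse pairwise to $(\rho_a, j_a) = (\rho_b + i, j_b)$ and $(\rho_a - i, j_a) = (\rho_b, j_b)$. On each of these the $+1$ and $-1$ contributions cancel, so $\t^{\Bn}_B = \t^{\Bn}_{B'}$. Thus the inequality is, in general, not strict even for a single step (it corresponds to a Jantzen move that merely shuffles beads within one runner-section pair), but in every case $\t^{\Bn}_B \geq \t^{\Bn}_{B'}$, which is exactly what is required.
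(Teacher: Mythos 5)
Your proof is correct and takes essentially the same route as the paper: the paper's entire proof is the remark that a single move $B \to_n B'$ satisfies $\t^{\Bn}_{B} \geq \t^{\Bn}_{B'}$, and your cell-by-cell analysis (including the degenerate case $\rho_a = \rho_b + i$, $j_a = j_b$, where equality genuinely occurs) is a correct verification of exactly that claim, combined with the obvious transitivity reduction. One cosmetic point: the decisive cell $(\rho_a, j_a)$ is the affected cell of \emph{largest} row index, which in the paper's abacus convention is drawn at the bottom, so calling it ``topmost'' is a slip of orientation only, not of logic, since you identify the correct cell and its $+1$ contribution.
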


\begin{proof}
Note that $B \to_n C$ implies $\t^{\Bn}_{B} \geq \t^{\Bn}_{C}$.
\end{proof}

For $\Bt \in \BT_{\Bn,s}$ let $\pi_{\Bt}:\CF_{s} \to \CF_\Bt$ be the natural projection defined by $\pi_{\Bt}(B) = B$ if $B \in \B_{\Bt}$ and zero otherwise.

Since, for each $s \in \BZ$, $\CP = \bigcup_{\Bt \in \BT_{\Bn,s}} \CP_{\Bt}$ is a partition of $\CP$, and the order $\geq$ on $\BT_{\Bn,s}$ defined above respects the Jantzen order $\geq_{J_n}$ by Lemma \ref{L:order}, we can apply Lemma \ref{genind} with $H(\lambda) = G(\lambda)$ and $\xi$ an element of $\Up$, or $H(\lambda) = [P(\lambda)]$ and $\xi$ an $i\text{-}\mathrm{Ind}$ functor.

\begin{thm} \label{T:decomp}
Let $s \in \BZ$ and $\Bt \in \BT_{\Bn,s}$.
If $\mu \in \CP_{\Bt}$, then $\pi_{\Bt} (G_s(\mu))=G_{\Bt}(\mu)$.
Equi\-valently $d_{\lambda\mu}(q)= d^{\,\Bt}_{\lambda\mu}(q)$ for all $\lambda,\mu\in\CP_{\Bt}$.
\end{thm}

\begin{proof}
We prove by induction on $|\mu|$ the following equivalent statement:  if $\mu \in \CP$, then $\pi_{\t^{\Bn}_{\beta_s(\mu)}} (G_s(\mu))=G_{\t^{\Bn}_{\beta_s(\mu)}}(\mu)$ for all $s \in \mathbb{Z}$.
If $\mu=\emptyset$, then $G(\mu) = \mu$ by \autoref{T:appendix}(1,3), while $G_{\t^{\Bn}_{\beta_s(\mu)}}(\mu) = \beta_s(\mu)$ by \autoref{l:tensorbasistriangular}(1,3), so that the result holds.  So let $\mu\neq\emptyset$ and assume that the statement holds for all partitions $\tilde{\mu}$ such that $|\tilde{\mu}| < |\mu|$, or both $|\tilde{\mu}| = |\mu|$ and $\tilde{\mu} < \mu$.  Let $s \in \BZ$ and let $\Bt = \t^{\Bn}_{\beta_s(\mu)}$. Let $c=\max\{i \in \BZ \mid \Bt(i,j)>0 \text{ for some } j\}$, so that row $c$ is the bottommost row that contains some bead in the $n$-abacus display of $\beta_s(\mu)$. 
There are two cases to consider.

If $X^{\Bn}_{c,j}(\beta_s(\mu))=\{0,\ldots, \Bt(c,j)-1\}$ for all $j$ (i.e.\ the beads occurring in each section of row $c$ in the $n$-abacus display of $\beta_s(\mu)$ occupy the leftmost positions), then
$v_{n_1,X^{\Bn}_{c,1}(\beta_s(\mu))}\otimes \cdots \otimes v_{n_r,X^{\Bn}_{c,r}(\beta_s(\mu))}$ is the highest weight vector
for the irreducible $\Up$-module
$\bw{\Bt(c,1)}(V_{n_1})\otimes \cdots \otimes \bw{\Bt(c,r)}(V_{n_r})$.

Let $\lambda \in \CP_{\Bt}$.  Note that if $\lambda>\mu$, then
$d_{\lambda\mu}(q) = 0$, by \autoref{T:appendix}(3), and
$d^{\,\Bt}_{\lambda\mu}(q)=0$ by \autoref{l:tensorbasistriangular}(3). So we may assume that $\lambda \leq \mu$.  In this case, the beads in each section of row $c$ of the $n$-abacus display of $\beta_s(\lambda)$ must be occupying the leftmost positions too, i.e. $X^{\Bn}_{c,j}(\beta_s(\lambda))=X^{\Bn}_{c,j}(\beta_s(\mu))$ for all $j$.
Let $\Bt' \in \BT_{\Bn}$ such that
$$
\Bt'(i,j) =
\begin{cases}
\Bt(i,j), &\text{if } i \ne c; \\
0, &\text{if } i =c.
\end{cases}
$$
Write $s' = \s(\Bt')$. 
Let $\tilde{\lambda}, \tilde{\mu} \in \CP$ such that $\beta_{s'}(\tilde{\lambda})$ and $\beta_{s'}(\tilde{\mu})$ are obtained by removing the beads in row $c$ from the $n$-abacus displays of $\beta_s(\lambda)$ and $\beta_s(\mu)$ respectively.  Then $\tilde{\lambda}, \tilde{\mu} \in \CP_{\Bt'}$ and
are the partitions obtained from $\lambda$ and $\mu$ respectively by removing the first $\sum_{j=1}^r \Bt(c,j)$ parts, so that
$\left|\tilde{\mu}\right|<\left|\mu\right|$. We have
$$d_{\lambda\mu}(q)=d_{\tilde{\lambda}\tilde{\mu}}(q)=
d^{\,\Bt'}_{\tilde{\lambda}\tilde{\mu}}(q)= d^{\,\Bt}_{\lambda\mu}(q),$$
where the first equality is `row removal' \cite[Theorem 1(1)]{CMT}, the second by induction and the last by
\autoref{l:canonicaltensorproduct}(2).

We now proceed to the second case.  In this case, there is a bead in row $c$ of the $n$-abacus display of $\beta_s(\mu)$ having a vacant preceding position that is in the same section as the bead.  Formally, there exists $k \in \beta_{s}(\mu)$ such that $\lfloor \frac{k}{n} \rfloor = c$, $k-1 \notin \beta_{s} (\mu)$, and $f_k \in \Up$.
Let $\lambda= \Par(\beta_{s}(\mu) \cup \{k-1\} \setminus \{k\})$.
Then $\lambda \in \CP_{\Bt}$, and we have, by \autoref{inductivecor}(2),
$$f_{k}(G_s(\lambda)) = G_s(\mu) + \sum_{\substack{\nu \in \CP \\ |\nu| = |\mu| \\ \nu < \mu}} L_\lambda^{\nu}(q) G_s(\nu),$$ 
with $L_\lambda^{\nu}(q)=L_\lambda^{\nu}(q^{-1})$ for all $\nu$ by \autoref{T:appendixlast}.  Applying \autoref{genind} with $\xi = f_k$, we get
$$
f_k(\pi_{\Bt}(G_s(\lambda))) = \pi_{\Bt}(f_k(G_s(\lambda))) = \pi_{\Bt}(G_s(\mu)) + \sum_{\substack{\nu \in \CP_{\Bt} \\ |\nu| = |\mu| \\ \nu < \mu }} L_\lambda^{\nu}(q) \pi_{\Bt}(G_s(\nu)).$$
Applying the induction hypothesis to $\lambda$ and $\nu$, we get
\begin{align*}
\pi_{\Bt} (G_s(\mu)) &=
f_{k}(G_{\Bt}(\lambda)) -
\sum_{\substack{\nu \in \CP_{\Bt} \\|\nu| = |\mu| \\ \nu < \mu}} L_\lambda^{\nu}(q) G_{\Bt}(\nu).
\end{align*}
Thus, $\pi_{\Bt}(G_s(\mu))$
is $\psi_{\Bt}$-invariant.  Furthermore, since $G(\mu) - \mu \in \bigoplus_{\nu \in \CP} q\mathbb{Z}[q]\nu$, we see that
$$G_s(\mu) - \beta_s(\mu) = \beta_s(G(\mu) - \mu) \in \bigoplus_{\nu \in \CP} q\mathbb{Z}[q]\beta_s(\nu) = \bigoplus_{B \in \B_s} q\mathbb{Z}[q]B,$$
so that
$$\pi_{\Bt}(G_s(\mu)) - \beta_s(\mu) = \pi_{\Bt}(G_s(\mu) - \beta_s(\mu)) \in \bigoplus_{B \in \B_{\Bt}} q\mathbb{Z}[q]B.$$  Hence
$\pi_{\Bt} (G_s(\mu))= G_{\Bt}(\mu)$ by \autoref{l:tensorbasistriangular}(4), as desired.
\end{proof}


\begin{cor}
Let $\Bt \in \BT_{\Bn}$, and let $\lambda,\mu \in \CP_{\Bt}$.  Then $d^{\,\Bt}_{\lambda\mu}(q) \ne 0$ only if $\mu \geq_{J_n} \lambda$.
\end{cor}

\begin{proof}
This follows immediately from \autoref{T:decomp} and \autoref{T:appendix}(3) .
\end{proof}

\begin{cor} \label{indproj}
Let $s \in \BZ$ and $\Bt \in \BT_{\Bn,s}$.  Let $\lambda \in \CP_{\Bt}$, and let $\xi \in \Up$.  Suppose that $\xi(G_s(\lambda)) = \sum_{\mu \in \CP} a_\mu(q)G_s(\mu)$.  Then
$$\xi(G_{\Bt}(\lambda)) = \sum_{\mu \in \CP_{\Bt}} a_\mu(q) G_{\Bt}(\mu).$$

\end{cor}

\begin{proof}
This follows immediately from \autoref{genind} and \autoref{T:decomp}.
\end{proof}

The next result may be regarded as a `runner removal' theorem.

\begin{theorem} \label{T:product}
Let $s \in \BZ$ and $\Bt \in \BT_{\Bn,s}$.  Let $\lambda,\mu \in \CP_{\Bt}$.  For each $j = 1,\dotsc, r$, let $s_j = \sum_{i \geq 0} \Bt(i,j) +\sum_{i<0} (\Bt(i,j) - n_j)$, and let $\lambda^{(j)}, \mu^{(j)} \in \CP$ such that \begin{align*}
\beta_{s_j}(\lambda^{(j)}) &= \{x+ in_j \mid i \in \BZ,\ x \in X^{\Bn}_{i,j}(\beta_s(\lambda)) \},\\
\beta_{s_j}(\mu^{(j)}) &= \{x+ in_j \mid i \in \BZ,\ x \in X^{\Bn}_{i,j}(\beta_s(\mu))\}.
\end{align*}
(Thus, $\lambda^{(j)}$ is the partition read off from the $n_j$-abacus consisting of runners $i$ for $\sigma_{j-1} \leq i < \sigma_j$ in the $n$-abacus display of $\beta_s(\lambda)$.  Similarly for $\mu^{(j)}$.)
Then
$$d_{\lambda\mu}^{\,n}(q) = \prod_{j=1}^r d_{\lambda^{(j)}\mu^{(j)}}^{\,n_j}(q).$$
\end{theorem}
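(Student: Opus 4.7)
The plan is to chain together three ingredients: the identification of $d_{\lambda\mu}^{\,n}(q)$ with a canonical-basis coefficient in $\CF_\Bt$ via \autoref{T:decomp}, the tensor-product decomposition of $\CF_\Bt$ from \autoref{tpcan}, and a second application of \autoref{T:decomp} on each tensor factor (with $\Bn$ replaced by $(n_j)$).

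First, since $\lambda,\mu\in\CP_\Bt$, \autoref{T:decomp} gives $d^{\,n}_{\lambda\mu}(q)=d^{\,\Bt}_{\lambda\mu}(q)$, i.e.\ the coefficient of $\beta_s(\lambda)$ when $G_\Bt(\mu)$ is expanded in the basis $\B_\Bt$. I would next carry this identity across the $\Up$-module isomorphism $\Phi_\Bt:\CF_\Bt\to\CF_{\Bt_1}\otimes\dotsb\otimes\CF_{\Bt_r}$ of \autoref{tpcan}. From the definition of $\Phi_\Bt$ it is immediate that
\[
\Phi_\Bt(\beta_s(\lambda))=\beta_{s_1}(\lambda^{(1)})\otimes\dotsb\otimes\beta_{s_r}(\lambda^{(r)}),
\]
because the map $B\mapsto(B_1,\dotsc,B_r)$ sends $\beta_s(\lambda)$ to the $r$-tuple of $n_j$-abacus displays on the relevant runners (this is exactly how $\lambda^{(j)}$ is defined). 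By the second assertion of \autoref{tpcan},
\[
\Phi_\Bt(G_\Bt(\mu))=G_{\Bt_1}(\mu^{(1)})\otimes\dotsb\otimes G_{\Bt_r}(\mu^{(r)}).
\]

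Next I would invoke the (tautological) fact that the bilinear form on $\CF_{\Bt_1}\otimes\dotsb\otimes\CF_{\Bt_r}$ for which the basis $\B_{\Bt_1}\otimes\dotsb\otimes\B_{\Bt_r}$ is orthonormal coincides, under $\Phi_\Bt$, with the form on $\CF_\Bt$ making $\B_\Bt$ orthonormal — this follows from $\Phi_\Bt$ being a bijection between these two orthonormal bases. Therefore
\[
d^{\,\Bt}_{\lambda\mu}(q)=\left\langle G_\Bt(\mu),\beta_s(\lambda)\right\rangle=\prod_{j=1}^r\left\langle G_{\Bt_j}(\mu^{(j)}),\beta_{s_j}(\lambda^{(j)})\right\rangle=\prod_{j=1}^r d^{\,\Bt_j}_{\lambda^{(j)}\mu^{(j)}}(q).
\]

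Finally, I would apply \autoref{T:decomp} once more, this time to the trivial tuple $\Bn=(n_j)$ with $r=1$: since $\lambda^{(j)},\mu^{(j)}\in\CP_{\Bt_j}$ by construction, the theorem yields $d^{\,\Bt_j}_{\lambda^{(j)}\mu^{(j)}}(q)=d^{\,n_j}_{\lambda^{(j)}\mu^{(j)}}(q)$. Stringing the equalities together gives the desired factorization. The only point requiring any care is the bookkeeping in the first step: one must verify that $\s(\Bt_j)=s_j$ as defined in the theorem, which is immediate from $\Bt_j(i)=\Bt(i,j)$ and the formula $\s(\Bt_j)=\sum_{i\ge 0}\Bt_j(i)-\sum_{i<0}(n_j-\Bt_j(i))$. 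There is no genuine obstacle here; the work is entirely in setting up \autoref{T:decomp} and \autoref{tpcan}, both of which have already been done.
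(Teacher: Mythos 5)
Your proposal is correct and follows essentially the same route as the paper: the paper's proof likewise reads off $\Phi_\Bt(\beta_s(\mu))=\bigotimes_j\beta_{s_j}(\mu^{(j)})$ and $\Phi_\Bt(G_\Bt(\mu))=\bigotimes_j G_{\Bt_j}(\mu^{(j)})$ from \autoref{tpcan}, deduces $d^{\,\Bt}_{\lambda\mu}(q)=\prod_j d^{\,\Bt_j}_{\lambda^{(j)}\mu^{(j)}}(q)$, and then applies \autoref{T:decomp} (both for $\Bn$ and for the singleton tuples $(n_j)$). The extra remarks you include (orthonormality of the standard bases under $\Phi_\Bt$, the check that $\s(\Bt_j)=s_j$) are just the routine bookkeeping the paper leaves implicit.
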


\begin{proof}
For each $j=1,\dotsc,r$, define $\Bt_j : \BZ \to \BN_0$ by $\Bt_j(i) = \Bt(i,j)$ for all $i \in \BZ$.
By \autoref{tpcan}, we have
\begin{align*}
\Phi_{\Bt}(\beta_s(\mu)) = \bigotimes_{j=1}^r \beta_{s_j}(\mu^{(j)}) \qquad \text{and} \qquad \Phi_{\Bt}(G_{\Bt}(\mu)) = \bigotimes_{j=1}^r G_{\Bt_j}(\mu^{(j)})
\end{align*}
for all $\mu \in \CP_{\Bt}$.  Thus, $d^{\,\Bt}_{\lambda\mu}(q) = \prod_{j=1}^r d^{\,\Bt_j}_{\lambda^{(j)}\mu^{(j)}}(q)$.  Now apply \autoref{T:decomp}.
\end{proof}

We end this section with a discussion of the case where $\Bn = (n_1,\dotsc,n_r)$ has $n_j = 2$ for some $j$.  We first look at the special case where $\Bn = (2)$.  In this case, since $\bw{1} V_2 = V_2$ and $\bw{0} V_2 = \bw{2} V_2 = \BC$, for each $\Bt \in \BT_{\Bn}$, $\CF_{\Bt}$ is isomorphic to a tensor power $V_2^{\otimes d}$ as $U_q(\Gsl_2)$-modules.  The latter has been studied extensively by Frenkel and Khovanov in \cite{FK}.  In particular, they provide closed formulas for $v_B^{\tp}$ in terms of graphical calculus combinatorics for each $B \in \B_{\Bt}$, so that there are closed formulas for $d^{\,\Bt}_{\lambda\mu}(q)$ for $\lambda,\mu \in \CP_{\Bt}$.  They also describe $f(v_B^{\tp})$, which we translate to our language as follows:

\begin{theorem}[{\cite{FK}}] \label{FKthm}
Let $\Bn = (2)$, and let $\lambda \in \CP$.  Let $s\in \BZ$ and write $\Bt = \t^{\Bn}_{\beta_s(\lambda)}$.  Suppose that
$$
f(G_{\Bt}(\lambda)) = \sum_{\mu \in \CP_{\Bt}} L_\lambda^{\mu}(q) G_{\Bt}(\mu).
$$
Then $$
L_{\lambda}^{\mu}(q) =
\begin{cases}
[1 + n_{\lambda\mu}]_q, &\text{if $\lambda$ can be obtained from $\mu$} \\
&\raisebox{5pt}{\text{\qquad by removing a normal node $\mathfrak{n}_{\lambda\mu}$ of $2$-residue $s+1$;}} \\
0, &\text{otherwise}.
\end{cases}
$$
Here $n_{\lambda\mu}$ is the number of normal nodes of $\mu$ with $2$-residue $s+1$ and to the right of $\mathfrak{n}_{\lambda\mu}$.
\end{theorem}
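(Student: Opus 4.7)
My plan is to reduce the statement to Frenkel and Khovanov's explicit computation of the $U_q(\Gsl_2)$-action on the canonical basis of $V_2^{\otimes d}$ in \cite{FK}, and to translate their graphical combinatorics into Kleshchev's normal-node combinatorics on the $2$-abacus.

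First I would apply $\Theta_{\Bt}$ from \autoref{isom}(3) to identify $\CF_{\Bt}$ with $\bigotimes_{i \in \BZ} \bw{\Bt(i,1)} V_2$.  Since $\Bt(i,1) \in \{0,1,2\}$ and $\bw{0}V_2 = \bw{2}V_2 = \BC(q)$ is trivial, only the rows of the $2$-abacus containing exactly one bead contribute non-trivially, so $\CF_{\Bt} \cong V_2^{\otimes d}$, where $d$ is the number of such rows.  Under this identification a single-bead row with the bead on runner $0$ contributes the highest-weight vector $v_{2,0}$, while a single-bead row with the bead on runner $1$ contributes $v_{2,1}$.  By \autoref{P:internal-external}, Lusztig's canonical basis of $V_2^{\otimes d}$, which is precisely the basis Frenkel and Khovanov describe via their nested cup-matching algorithm, corresponds under this identification to $\{G_{\Bt}(\lambda) \mid \lambda \in \CP_{\Bt}\}$.

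Next I would invoke the FK formula for the action of $f = f_1$ on this canonical basis.  In their setup, $f$ applied to a canonical basis vector indexed by a sign sequence is a sum over the positions at which the sign corresponding to $v_{2,0}$ is \emph{unpaired} by the nested matching of the sequence, with coefficient $[1+k]_q$ at each such position, where $k$ is the number of unpaired such signs strictly to the right.  The resulting vector is the canonical basis vector attached to the sequence obtained by changing that single sign from $v_{2,0}$ to $v_{2,1}$, which on the abacus is exactly the $f_1$-move that advances the corresponding bead from runner $0$ to runner $1$ (equivalently, adds a node of $2$-residue $1-s \equiv s+1 \pmod 2$).

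The crux of the argument, and what I expect to be the main obstacle, is to verify that FK's nested cup-matching coincides with Kleshchev's combinatorics of normal nodes.  Both are iterative cancellation procedures on a sign sequence: FK pairs each $v_{2,1}$-sign with the first available $v_{2,0}$-sign to its right, while Kleshchev's definition of a normal removable node recursively pairs addable nodes with later removable nodes of the same residue.  Reading the abacus from bottom to top (so that nodes further to the right of the partition correspond to later entries of the sequence), one checks that the two matchings agree bracket-for-bracket, and that the doubly-occupied and empty rows of the abacus are harmlessly absorbed into the trivial tensor factors without altering the matching.  Once this combinatorial dictionary is in place, the unpaired $v_{2,0}$-signs correspond precisely to those addable nodes of $\lambda$ whose addition produces a normal removable node of $\mu$, the FK count $k$ becomes $n_{\lambda\mu}$, and FK's formula yields $L_\lambda^\mu(q) = [1+n_{\lambda\mu}]_q$ as claimed; all other $\mu$ contribute zero because they correspond either to non-adjacent bead moves or to paired $v_{2,0}$-signs.
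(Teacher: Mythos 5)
Your overall route---identify $\CF_{\Bt}$ with a tensor power of $V_2$ via $\Theta_{\Bt}$, match canonical bases using \autoref{P:internal-external}, absorb the empty and full rows into trivial factors, and translate Frenkel--Khovanov's arc matching into Kleshchev's normal-node pairing---is the same as the paper's. But the formula you then quote from \cite{FK} is not a correct description of the action of $f$ on the canonical basis, and this is precisely the delicate point. The explicit diagrammatic formulas in \cite[pp.~445--447]{FK} are stated for the basis \emph{dual} to $\{G_{\Bt}(\lambda)\}$ with respect to an inner product for which $e$ and $f$ are adjoint; to expand $f\,G_{\Bt}(\lambda)$ one must use their formula for the action of $e$ on the dual basis and transfer it by adjointness. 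The correct expansion is not indexed by the unpaired $v_{2,0}$-signs of $\lambda$ with coefficients $[1+k]_q$, $k$ the number of unpaired $v_{2,0}$-signs to the right; it is indexed by those sign flips for which the new $v_{2,1}$-sign is unpaired in the \emph{new} sequence $\mu$ (i.e.\ the added node is normal in $\mu$), with coefficient $[1+n_{\lambda\mu}]_q$ counting unpaired $v_{2,1}$-signs of $\mu$ to its right. These recipes genuinely differ, both in the index set and in the coefficients, so your asserted dictionary ``the unpaired $v_{2,0}$-signs correspond precisely to those addable nodes of $\lambda$ whose addition produces a normal removable node of $\mu$'' is false.

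A minimal counterexample: take $s=2$ and $\lambda=(1)$, so $\beta_2(\lambda)=\{2,0,-1,-2,\dotsc\}$ and rows $0$ and $1$ each carry one bead on runner $0$; then $\Theta_{\Bt}$ sends $(1),(1,1),(2),(2,1)$ to $v_{2,0}\otimes v_{2,0}$, $v_{2,1}\otimes v_{2,0}$, $v_{2,0}\otimes v_{2,1}$, $v_{2,1}\otimes v_{2,1}$ respectively, and $G_{\Bt}((1))$ corresponds to $v_{2,0}\otimes v_{2,0}$. Applying the coproduct, $f_1(v_{2,0}\otimes v_{2,0}) = v_{2,0}\otimes v_{2,1} + q\,v_{2,1}\otimes v_{2,0}$, which is $\psi_{\Bt}$-invariant with the correct leading term, hence equals $\Theta_{\Bt}(G_{\Bt}((2)))$ by \autoref{l:tensorbasistriangular}(4). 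So $f\,G_{\Bt}((1)) = G_{\Bt}((2))$, consistent with the theorem, since $(2,1)$ is not a normal node of $(1,1)$ (the addable node $(1,2)$ of the same residue lies to its right) while $(1,2)$ is normal in $(2)$. Your formula instead sees two unpaired $v_{2,0}$-signs in $\lambda$ and predicts $f\,G_{\Bt}((1)) = G_{\Bt}((2)) + [2]_q\,G_{\Bt}((1,1))$; note that reversing your reading convention does not help, since $\lambda$ contains no $v_{2,1}$-sign and so both $v_{2,0}$-signs stay unpaired under any matching. To repair the argument you must either re-index the sum by normal nodes of $\mu$ or, as the paper does, invoke Frenkel--Khovanov's description of the action of $e$ on the dual canonical basis and dualize; the remainder of your translation (removable nodes of $2$-residue $s+1$ as $v_{2,1}$, addable nodes as $v_{2,0}$, the nested matching, the trivial factors) is sound.
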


\begin{proof}
We briefly describe our translation of the results in \cite{FK}.  The basis vector $v_{2,1}$ of $V_2$, represented by an up arrow in the diagrams in \cite{FK}, corresponds to a removable node of $2$-residue $s+1$, while the other basis vector $v_{2,0}$, represented by a down arrow in the diagrams in \cite{FK}, corresponds to an addable node of $2$-residue $s+1$.  An arc connecting the up and down arrows induces a pairing of the corresponding removable node with the addable node.
We should point out that the description in \cite[p.445-7]{FK} is given in terms of the basis dual to $\{G_{\Bt}(\lambda)\}$ with respect to an inner product for which $e$ and $f$ are adjoint operators, and so we use their description of the action of $e$ rather than that of $f$.
\end{proof}

\begin{cor} \label{LBt}
Let $s \in \BZ$, and $\lambda \in \CP$.  Write $\Bt$ for $\t^{\Bn}_{\beta_s(\lambda)}$.  Suppose that $\Bn = (n_1,\dotsc, n_r)$ has $n_d = 2$ for some $d$, and let $k = \sum_{j=1}^{d-1} n_j+1$.  Let
$$
f_k(G_{\Bt}(\lambda)) = \sum_{\mu \in \CP_{\Bt}} L_\lambda^{\mu}(q) G_{\Bt}(\mu).
$$
Then $$
L_{\lambda}^{\mu}(q) =
\begin{cases}
[1 + n_{\lambda\mu}]_q, &\text{if $\lambda$ can be obtained from $\mu$} \\
&\raisebox{5pt}{\text{\qquad by removing a normal node $\mathfrak{n}_{\lambda\mu}$ of $n$-residue $k-s$;}} \\
0, &\text{otherwise}.
\end{cases}
$$
Here $n_{\lambda\mu}$ is the number of normal nodes of $\mu$ with $n$-residue $k-s$ and to the right of $\mathfrak{n}_{\lambda\mu}$.
\end{cor}

\begin{proof}
By \autoref{tpcan} and adopting the notations there with $\lambda = \Par(B)$ and $\lambda^{(i)} = \Par(B_i)$, we have
\begin{align*}
f_k(G_{\Bt}(\lambda))) &= \Phi_{\Bt}^{-1} ( G_{\Bt_1}(\lambda^{(1)}) \otimes \dotsb \otimes G_{\Bt_{d-1}}(\lambda^{(d-1)}) \otimes  \\
& \qquad\qquad f(G_{\Bt_d}(\lambda^{(d)})) \otimes G_{\Bt_{d+1}}(\lambda^{(d+1)}) \otimes \dotsb \otimes G_{\Bt_r}(\lambda^{(r)})) \\[6pt]
&= \Phi_{\Bt}^{-1} ( G_{\Bt_1}(\lambda^{(1)}) \otimes \dotsb \otimes G_{\Bt_{d-1}}(\lambda^{(d-1)}) \otimes \\
&\qquad  \sum_{\mu^{(d)} \in \CP_{\Bt_d}} L_{\lambda^{(d)}}^{\mu^{(d)}}(q) (G_{\Bt_d}(\mu^{(d)})) \otimes G_{\Bt_{d+1}}(\lambda^{(d+1)}) \otimes \dotsb \otimes G_{\Bt_r}(\lambda^{(r)}))
\end{align*}
The result thus follows from \autoref{FKthm}.
\end{proof}

\begin{cor}
Let $\lambda, \mu \in \CP$.  Suppose that there exists a residue class $i$ modulo $n$ such that $\lambda$ and $\mu$ has the same sets of nodes of $n$-residue $i-1$ and $i+1$.
Let $s \in \BZ$.  Writing $f_{s+i}(G_s(\lambda)) = \sum_{\nu \in \CP} L_s^{\nu}(q) G_s(\nu)$, we have
$$
L_s^{\mu}(q) =
\begin{cases}
[1 + n_{\lambda\mu}]_q, &\text{if $\lambda$ can be obtained from $\mu$} \\
&\raisebox{5pt}{\text{\qquad by removing a normal node $\mathfrak{n}_{\lambda\mu}$ of $n$-residue $i$;}} \\
0, &\text{otherwise}.
\end{cases}
$$
Here, $n_{\lambda\mu}$ is the number of normal nodes of $\mu$ with $n$-residue $i$ and to the right of $\mathfrak{n}_{\lambda\mu}$.
\end{cor}

\begin{proof}
Let $\Bn = (2,n-2)$, and let $x = 1-i$.  By \autoref{characterisation}, our conditions on $\lambda$ and $\mu$ force $\t^{\Bn}_{\beta_x(\lambda)} = \t^{\Bn}_{\beta_x(\mu)}$.  Thus, by \autoref{indproj} and \autoref{LBt} (with $k=1$), we see that the result holds for $L_x^{\nu}(x)$.  To deal with a general $s$, let $\theta_{ab} : \CF_a \to \CF_b$ ($a,b \in \BZ$) be the $\BC(q)$-linear isomorphism defined by $\theta_{ab} (\beta_a(\nu)) = \beta_b(\nu)$ for all $\nu \in \CP$.  Then $\theta_{ab}(G_a(\lambda)) = G_b(\nu)$ for all $\nu \in \CP$.  Furthermore $\theta_{ab}$ intertwines $f_r$ with $f_{r+b-a}$ for all residue class $r$ modulo $n$.  Thus, applying $\theta_{xs}$ to $f_1(G_x(\lambda)) = \sum_{\nu \in \CP} L_x^{\nu}(q) G_x(\nu)$, we get $f_{s+i}(G_s(\lambda)) = \sum_{\nu \in \CP} L_x^{\nu}(q) G_s(\nu)$, and the result follows.
\end{proof}

\section{Decomposition numbers} \label{S:decompnumbers}

Recall that we identified the Grothendieck group $K_0(\bigoplus_m \mathbf{S}_m\text{-\textbf{mod}})$ with the classical (non-quantized) Fock space $\CF_c$.  Recall also the $A$-lattice $\CF_{\BZ,A}$ (where $A = \BZ[q,q^{-1}]$) and the $\BZ$-linear map $\E : \CF_{\BZ,A} \to \CF_c$ which intertwines the Chevalley generators of $\U_A^-$ with the $i$-Ind functors.

We assume in this section that $n= p$, the characteristic over which the Schur algebras $\BS_m$ are defined.  Let $\Bn = (n_1,\dotsc, n_r)$ be a tuple of positive integers summing to $p$.  For each $\Bt \in \BT_{\Bn}$, define $\pie_{\Bt} : \CF_c \to \CF_c$ to be the natural projection such that $\pie_{\Bt}(\lambda) = \lambda$ if $\lambda \in \CP_{\Bt}$ and zero otherwise, and we write $\CF_{c,\Bt}$ for the image of $\pie_{\Bt}$.  In addition, if $M$ is a module of a Schur algebra, we write $[M]_{\Bt}$ for $\pie_{\Bt}[M]$.

\begin{thm} \label{T:Schur}
Let $\Bn = (n_1, \dotsc, n_r)$ be a tuple of positive integers summing to $p$.
Suppose that $n_j \in \{1,2\}$ for all $j$, so that $\Up$ is isomorphic to
a tensor product of copies of $U_q(\Gsl_2)$.  Let $\Bt \in \BT_{\Bn}$. Then for all $\mu\in\CP_{\Bt}$, we have
$[P(\mu)]_{\Bt} = \E(G_{\Bt}(\mu))$. Equivalently,
$d_{\lambda\mu} = d^{\,\Bt}_{\lambda\mu}(1)$ for all $\lambda,\mu \in \CP_{\Bt}$.
\end{thm}

\begin{proof}
We follow an inductive argument reminiscent of the proof of \autoref{T:decomp}.  Once again, we prove the following equivalent statement: if $\mu \in \CP$, then $[P(\mu)]_{\t^{\Bn}_{\beta_s(\mu)}} = \E(G_{\t^{\Bn}_{\beta_s(\mu)}}(\mu))$ for all $s \in \BZ$.
If $\mu=\emptyset$ the result is clear. So let $\mu\neq\emptyset$ and assume the statement holds for all partitions $\tilde{\mu}$ such that $|\tilde{\mu}| < |\mu|$, or both $|\tilde{\mu}| = |\mu|$ and $\tilde{\mu} < \mu$.
Let $s \in \BZ$ and let $\Bt = \t^{\Bn}_{\beta_s(\mu)}$. Let $c=\max\{i \in \BZ \mid \Bt(i,j)>0 \text{ for some } j\}$.
There are two cases to consider.

If $X^{\Bn}_{c,j}(\beta_s(\mu))=\{0,\ldots, \Bt(c,j)-1\}$ for all $j$ (i.e.\ the beads occurring in each section of row $b$ in the $n$-abacus display of $B$ occupy the leftmost positions), then
$v_{n_1,X^{\Bn}_{c,1}(\beta_s(\mu))}\otimes \cdots \otimes v_{n_r,X^{\Bn}_{c,r}(\beta_s(\mu))}$ is the highest weight vector
for the irreducible $\Up$-module
$\bw{\Bt(c,1)}(V_{n_1})\otimes \cdots \otimes \bw{\Bt(c,r)}(V_{n_r})$.

Let $\lambda \in \CP_{\Bt}$.  Note that if $\lambda>\mu$, then
$d_{\lambda\mu} = 0$ by \autoref{decompnumbers}(2), and
$d^{\,\Bt}_{\lambda\mu}(q)=0$ by \autoref{l:tensorbasistriangular}(3). So we may assume that $\lambda \leq \mu$.  In this case,
$X^{\Bn}_{c,j}(\beta_s(\lambda))=X^{\Bn}_{c,j}(\beta_s(\mu))$ for all $j$.
Let $\Bt' \in \BT_{\Bn}$ such that
$$
\Bt'(i,j) =
\begin{cases}
\Bt(i,j), &\text{if } i \ne c; \\
0, &\text{if } i =c.
\end{cases}
$$
Write $s' = \s(\Bt')$. 
Let $\tilde{\lambda}, \tilde{\mu} \in \CP$ such that $\beta_{s'}(\tilde{\lambda})$ and $\beta_{s'}(\tilde{\mu})$ may be obtained by removing the beads in row $c$ from the $n$-abacus displays of $\beta_s(\lambda)$ and $\beta_s(\mu)$ respectively.  Then $\tilde{\lambda}, \tilde{\mu} \in \CP_{\Bt'}$ and
are the partitions obtained from $\lambda$ and $\mu$ respectively by removing the first $s-s'$ parts, so that
$\left|\tilde{\mu}\right|<\left|\mu\right|$. We have
$$d_{\lambda\mu}=d_{\tilde{\lambda}\tilde{\mu}}=
d^{\,\Bt'}_{\tilde{\lambda}\tilde{\mu}}(1)= d^{\,\Bt}_{\lambda\mu}(1),$$
where the first equality is `row removal' \cite[Theorem 6.18]{Jam}, the second by induction and the last by
\autoref{l:canonicaltensorproduct}(2).

We now proceed to the second case.  In this case, there is a bead in row $c$ of the $n$-abacus display of $\beta_s(\mu)$ having a vacant preceding position that is in the same section as the bead.  Formally,
there exists $k \in \beta_{s}(\mu)$ such that $\lfloor \frac{k}{n} \rfloor = c$, $k-1 \notin \beta_{s} (\mu)$, and $f_k \in \Up$.
Let $\lambda$ be the partition such that $\beta_{s}(\lambda) = \beta_{s}(\mu) \cup \{k-1\} \setminus \{k\} $.
Then $\lambda \in \CP_{\Bt}$, and we have, by \autoref{inductivecor} and \autoref{indproj},
\begin{align*}
f_{k}(G_{\Bt}(\lambda)) &= G_{\Bt}(\mu) + \sum_{\substack{\nu\in\CP_{\Bt}\\ |\nu| = |\mu| \\ \nu < \mu}} L_\lambda^{\nu}(q) G_{\Bt}(\nu) \\
[(k-s)\text{-Ind} (P(\lambda))]&= [P(\mu)] + \sum_{\substack{\nu\in\CP \\ |\nu| = |\mu| \\ \nu < \mu}} L_{\lambda}^{\nu} [P(\nu)],
\end{align*}
and $L_\lambda^\nu = [(k-s)\text{-Res} (L(\nu)) : L(\lambda)] \in \BN_0$ for all $\nu$.  Note that $\CF_{c,\Bs}$ is invariant under $(k-s)$-Ind for all $\Bs \in \B_s$.  Thus applying $\pie_{\Bt}$ to the last equation above, we get by \autoref{genind}
$$
(k-s)\text{-Ind} ([(P(\lambda))]_{\Bt}) = [P(\mu)]_{\Bt} + \sum_{\substack{\nu\in\CP_{\Bt} \\ |\nu| = |\mu| \\ \nu < \mu}} L_{\lambda}^{\nu} [P(\nu)]_{\Bt}.
$$
Hence
\begin{align*}
 [P(\mu)]_{\Bt}- \E(G_{\Bt}(\mu)) & = (k-s)\text{-Ind}([P(\lambda)]_{\Bt}) - \E(f_k(G_{\Bt}(\lambda))) \\
&\qquad \qquad + \sum_{\substack{\nu\in\CP_{\Bt} \\ |\nu| = |\mu| \\ \nu < \mu}} \left(L_\lambda^\nu(1) \E(G_{\Bt}(\nu))- L_\lambda^\nu [P(\nu)]_{\Bt}\right) \\
 & =
\sum_{\substack{\nu\in\CP_{\Bt} \\ |\nu| = |\mu| \\ \nu < \mu}} \left(L_\lambda^{\nu}(1) -L_\lambda^\nu \right) \E(G_{\Bt}(\nu)), \tag{$*$}
\end{align*}
using the fact that $\E$ intertwines $f_k$ and $(k-s)$-Ind, and applying induction hypothesis to $\lambda$ and $\nu$.

By \autoref{Klthm} and \autoref{LBt}, we see that  $L_{\lambda}^\nu(1) \leq L_{\lambda}^{\nu}$ for all $\nu \in \CP_{\Bt}$.  If $L_{\lambda}^\nu(1) \ne L_{\lambda}^{\nu}$ for some $\nu \in \CP_{\Bt}$, let $\rho \in \CP_{\Bt}$ be maximal with respect to $\geq_{J_p}$ such that $L_{\lambda}^\rho(1) \ne L_{\lambda}^{\rho}$.  Then $L_{\lambda}^{\rho}(1) < L_\lambda^{\rho}$, and, for all $\nu \in \CP_{\Bt}$, $L_{\lambda}^{\nu}(1) - L_{\lambda}^{\nu} = 0$ if $\nu >_{J_p} \rho$ while $d_{\rho\nu}(q) = 0$ if $\nu \not\geq_{J_p} \rho$ by \autoref{T:appendix}(3).
Thus, comparing the coefficient of $\beta_s(\rho)$ in both sides of ($*$), we get by \autoref{T:decomp}
$$
d_{\rho\mu} - d_{\rho\mu}(1) = \sum_{\substack{\nu \in \CP_{\Bt} \\ |\nu| = |\mu| \\ \nu < \mu}} (L_\lambda^\nu(1) - L_\lambda^{\nu}) d_{\rho\nu}(1) = L_\lambda^{\rho}(1)-L_\lambda^{\rho}.
$$
But $d_{\rho\mu} \geq d_{\rho\mu}(1)$ by \autoref{decompnumbers}(3) while $L_\lambda^{\rho}(1) < L_\lambda^{\rho}$, giving us a contradiction.
Therefore, $L_\lambda^\nu(1) = L_\lambda^\nu$ for all $\nu \in \CP_{\Bt}$, and we get from ($*$) that $[P(\mu)]_{\Bt}=\E(G_{\Bt}(\mu))$, as desired.
\end{proof}

\begin{cor} \label{C:Schur}
Let $\Bn = (n_1, \dotsc, n_r)$ be a tuple of positive integers summing to $p$, with $n_j \in \{1,2\}$ for all $j$.
Then $d_{\lambda\mu} = d_{\lambda\mu}(1)$ for all $\lambda,\mu \in \CP_{\Bt}$  and $\Bt \in \BT_{\Bn}$.
\end{cor}

\begin{proof}
This follows immediately from \autoref{T:decomp} and \autoref{T:Schur}.
\end{proof}

We note that if $\Bn = (n_1, \dotsc, n_r)$ is a tuple of positive integers summing to $p$, with $n_j \in \{1,2\}$ for all $j$, then $d^{\,\Bt}_{\lambda\mu}(q)$, and hence $d_{\lambda\mu}(q)$ and $d_{\lambda\mu}$ by virtue of \autoref{T:Schur} and \autoref{C:Schur}, can be described by the closed formulas found by Frenkel and Khovanov using \autoref{T:product} for all $\Bt \in \BT_{\Bn}$, $\lambda, \mu \in \CP_{\Bt}$.

\begin{rem} \autoref{C:Schur} does not hold for arbitrary $\Bn$, even under James's hypothesis that the size of the partitions $\lambda$ and $\mu$ is strictly less that $p^2$ \cite{Jam}. Williamson \cite{W} produces counterexamples in which $\lambda,\mu\in \CP_\Bt$ are partitions of
	$p\binom{N}{2}<p^2$, and where $\Bn=(p)$ and $\Bt$ is given by
	$$\Bt(i)=
	\begin{cases}
	p & \text{if } i\leq 0 \\
	1 & \text{if } 1 \leq i \leq N \\
	0 & \text{if } i \geq N+1.
	\end{cases}$$
	\end{rem}




Two residue classes $a,b$ modulo $p$ are {\em adjacent} if and only if $a-b \equiv_p \pm 1$.

\begin{lemma} \label{12}
Let $\lambda, \mu \in \CP$, and let $I$ be the set of $p$-residues of the nodes in $([\lambda] \setminus [\mu]) \cup ([\mu] \setminus [\lambda])$.  Suppose that no two elements of $I$ are adjacent. Then there exist $s \in \{0,1\}$ and $\Bn = (n_1,\dotsc, n_r)$, with $n_j \in \{1,2\}$ for all $j$ and $\sum_{j=1}^r n_j = p$, such that $\t^{\Bn}_{\beta_s(\lambda)} = \t^{\Bn}_{\beta_s(\mu)}$ and $I = \{ \sum_{j=1}^i n_j -s -1 \mid n_i = 2 \}$.
\end{lemma}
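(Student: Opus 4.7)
The plan is to construct $(s, \Bn)$ explicitly by a direct combinatorial recipe. The starting point is \autoref{characterisation}: the condition $\t^{\Bn}_{\beta_s(\lambda)} = \t^{\Bn}_{\beta_s(\mu)}$ is equivalent to $\lambda$ and $\mu$ having the same sets of nodes of $p$-residue $\sigma_j - s$ for each $j = 0, 1, \dotsc, r-1$, which by definition of $I$ is the same as saying that none of these ``boundary residues'' lies in $I$. The requirement $I = \{\sigma_i - s - 1 \mid n_i = 2\}$ in turn dictates that each $a \in I$ must arise from a 2-block occupying the two consecutive abacus runners whose residues (mod $p$) are $a - 1$ and $a$. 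Thus the problem reduces to partitioning the abacus runners $0, 1, \dotsc, p-1$ into consecutive blocks of size 1 or 2 such that (i) each $a \in I$ has its own 2-block with runner residues $a-1, a$, and (ii) the starting runner of every block has residue not in $I$.

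The only genuine subtlety is the cyclic-versus-linear tension: residues are cyclic modulo $p$, but runners are linearly ordered, so the residue pair $\{p-1, 0\}$ cannot be placed inside a single 2-block. I would resolve this by taking $s = 0$ if $0 \notin I$, and $s = 1$ otherwise. Non-adjacency of $I$ ensures that in the case $0 \in I$ we automatically have $p - 1 \notin I$, so in both cases the desired 2-block for each $a \in I$ can be taken to be at runners $(a-1, a)$ when $s = 0$, or at runners $(a, a+1)$ when $s = 1$. Non-adjacency of $I$ further guarantees that these 2-blocks are pairwise disjoint subsets of $\{0, 1, \dotsc, p-1\}$. The runners not used by any 2-block are then declared 1-blocks, and reading off the block sizes from left to right yields $\Bn = (n_1, \dotsc, n_r)$ with each $n_j \in \{1,2\}$ and $\sum_j n_j = p$.

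The final step is direct verification. For each 2-block associated to $a \in I$, one computes $\sigma_i = a + 1 + s$, so $\sigma_i - s - 1 = a$, establishing $I = \{\sigma_i - s - 1 \mid n_i = 2\}$. For the other condition, the boundary residues $\sigma_j - s$ are precisely the residues (mod $p$) of the starting runners of the blocks: the starting residue of a 2-block corresponding to $a \in I$ is $a - 1$, which lies outside $I$ by non-adjacency; the starting residue of a 1-block sits at a runner $k$ not covered by any 2-block, and an easy check in each of the cases $s = 0$ and $s = 1$ shows that this residue cannot lie in $I$. Hence \autoref{characterisation} delivers $\t^{\Bn}_{\beta_s(\lambda)} = \t^{\Bn}_{\beta_s(\mu)}$. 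Beyond the cyclic-versus-linear bookkeeping handled by the choice of $s$, I do not anticipate any significant obstacle.
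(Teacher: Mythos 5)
Your proposal is correct and is essentially the paper's own argument: the same choice $s=1$ if $0\in I$ and $s=0$ otherwise, the same partition of the runners into blocks of sizes $1$ and $2$ (equivalently, making the block-start residues exactly the complement of $I$), and the same appeal to \autoref{characterisation}. Your write-up merely spells out the disjointness and boundary-residue checks that the paper leaves implicit.
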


\begin{proof}
Let $s = 1$ if $0 \in I$ and $s=0$ if $0 \notin I$.  Let $\Bn = (n_1,\dotsc,n_r)$ be a sequence of positive integers summing to $p$ such that $\{ \sum_{j=1}^i n_j - s \mid s \leq i \leq r-1+s \} = \{ 0,1,\dotsc, p-1 \} \setminus I$.  Since the elements of $I$ are pairwise non-adjacent, we see that $n_j \leq 2$ for all $j$.  Furthermore, $\t^{\Bn}_{\beta_s(\lambda)} = \t^{\Bn}_{\beta_s(\mu)}$ by \autoref{characterisation} and $I =\{\sum_{j=1}^i n_j -s -1 \mid n_i = 2\}$.
\end{proof}

If $\mu \in \CP$, and $\lambda$ is obtained from $\mu$ by removing a removable node of $p$-residue $i$ and adding an addable node of $p$-residue $i$, we say that $\lambda$ is obtained from $\mu$ by {\em moving} a node of $p$-residue $i$.

\begin{thm} \label{decomp*}
Let $\mu \in \CP$ and suppose that $\lambda$ is obtained from $\mu$ by moving some nodes whose $p$-residues are pairwise non-adjacent.  Let $I$ be the subset of residue classes modulo $p$ which occur as $p$-residues of the nodes moved to obtain $\lambda$.  For each $i \in I$, let $\lambda(i)$ be the partition obtained from $\mu$ by moving only those nodes with $p$-residue $i$.  Then
$$
d_{\lambda\mu} = \prod_{i\in I} d_{\lambda(i),\mu}.
$$
\end{thm}

\begin{proof}
By \autoref{12}, there exist $s \in \{0,1\}$ and $\Bn = (n_1,\dotsc,n_r)$, with $n_j \in \{1,2\}$ for all $j$ and $\sum_{j=1}^r n_j = p$, such that $\t^{\Bn}_{\beta_s(\lambda)} = \t^{\Bn}_{\beta_s(\mu)}$.  Furthermore, by \autoref{characterisation}, $\t^{\Bn}_{\beta_s(\lambda)} = \t^{\Bn}_{\beta_s(\lambda)(i)} = \t^{\Bn}_{\beta_s(\mu)}$ for all $i \in I$.  By \autoref{T:product} and adopting the notations there, we have
$$
d^{\,p}_{\lambda\mu}(q) = \prod_{j=1}^r d^{\,n_j}_{\lambda^{(j)}\mu^{(j)}}(q) = \prod_{j\,: \,n_j =2} d^{\,2}_{\lambda^{(j)}\mu^{(j)}}(q),
$$
since $\lambda^{(j)} = \mu^{(j)}$ if $n_j = 1$.  For each $j$ such that $n_j = 2$, let $i_j = \sum_{i=1}^j n_i -s -1$.  Then
$$
d^{\,p}_{\lambda(i_j),\mu}(q) = d^{\,2}_{\lambda^{(j)}\mu^{(j)}}(q) \left(\prod_{k \ne j} d^{\,n_k}_{\mu^{(k)}\mu^{(k)}}(q) \right) = d^{\,2}_{\lambda^{(j)}\mu^{(j)}}(q),
$$
by \autoref{T:product} and \autoref{T:appendix}(1).  Thus,
$$
d_{\lambda\mu} = d^{\,p}_{\lambda\mu}(1)= \prod_{j\,: \,n_j = 2} d^{\,2}_{\lambda^{(j)}\mu^{(j)}}(1) = \prod_{j\,: \, n_j = 2} d^{\,p}_{\lambda(i_j),\mu}(1) = \prod_{i \in I} d^{\,p}_{\lambda(i),\mu}(1) = \prod_{i \in I} d_{\lambda(i),\mu}$$
by \autoref{12} and \autoref{C:Schur}.
\end{proof}

\begin{thm} \label{branching}
Let $\mu \in \CP$ and let $\lambda$ be the partition obtained from $\mu$ by removing a removable node $\mathfrak{n}$ of $p$-residue $i$ and moving $m$ other nodes, such that the $p$-residues of these nodes (including $\mathfrak{n}$) are pairwise non-adjacent.  Then
$$
[i\text{-}{\rm Res}\, (L(\mu)) : L(\lambda)] = 0,
$$
unless $m=0$ and $\mathfrak{n}$ is a normal node of $\mu$.
\end{thm}

\begin{proof}
By \autoref{12}, there exist $s \in \{0,1\}$ and $\Bn = (n_1,\dotsc, n_r)$ with $n_j \in \{1,2\}$ for all $j$ and $\sum_{j=1}^r n_j = p$ such that $\t^{\Bn}_{\beta_s(\lambda)} = \t^{\Bn}_{\beta_s(\mu)}$.  Let $\Bt = \t^{\Bn}_{\beta_s(\lambda)}$.   Applying $\pie_{\Bt}$ to
$i\text{-Ind}([P(\lambda)]) = \bigoplus_{\nu \in \CP} L_\lambda^\nu [P(\nu)]$, where $L_\lambda^\nu = [i\text{-}{\rm Res}\, (L(\nu)) : L(\lambda)]$ for all $\nu \in \CP$, we get by \autoref{genind}(2)
$$
i\text{-Ind}([P(\lambda)]_{\Bt}) = \bigoplus_{\nu \in \CP_{\Bt}} L_\lambda^\nu [P(\nu)]_{\Bt}.$$
On the other hand, applying $\E$ to $f_{s+i}(G_{\Bt}(\lambda)) = \sum_{\nu \in \CP_{\Bt}} L_\lambda^{\nu}(q) G_{\Bt}(\nu)$, where $L_{\lambda}^{\nu}(q) \in A$ for all $\nu \in \CP_{\Bt}$, we get by \autoref{T:Schur}
$$
i\text{-Ind}([P(\lambda)]_{\Bt}) = \bigoplus_{\nu \in \CP_{\Bt}} L_\lambda^\nu(1) [P(\nu)]_{\Bt}.
$$
Thus, $L_{\lambda}^{\nu}(1) = L_\lambda^\nu$ for all $\nu \in \CP_{\Bt}$.  The statement now follows from \autoref{LBt}.
\end{proof}

\begin{rem} \hfill
\begin{enumerate}
\item Exactly the same proofs of \autoref{decomp*} and \autoref{branching} in fact prove the respective results hold in a slightly more general setting: whenever any two nodes in the symmetric difference of $[\lambda]$ and $[\mu]$ do not have adjacent $p$-residues.  But the additional information is actually trivial---they can easily be seen to hold with our current knowledge of decomposition numbers and branching coefficients of Schur algebras.
\item We have deliberately left out in \autoref{branching} the closed formula for $[i\text{-}{\rm Res}\, (L(\mu)) : L(\lambda)]$ when $\lambda$ is obtained from $\mu$ by removing a normal node of residue $i$ (which is one plus the number of normal nodes of $\mu$ of residue $i$ and to the right of the normal node removed to obtain $\lambda$, see \autoref{Klthm}) so as not to give a false impression that we have an alternative and independent proof of \autoref{Klthm}.
\end{enumerate}
\end{rem}

\end{document}